\documentclass[11pt,a4paper]{article}

\usepackage[margin=1in]{geometry}
\usepackage[T1]{fontenc}
\usepackage[utf8]{inputenc}
\usepackage{lmodern}
\usepackage{microtype}
\usepackage{xcolor}
\definecolor{revisionpurple}{RGB}{128,0,128}

\usepackage{amsmath,amssymb,amsfonts,amsthm,mathtools,bm}
\usepackage{booktabs}
\usepackage{enumitem}
\usepackage[numbers,sort&compress]{natbib}
\usepackage[colorlinks=true,linkcolor=blue,citecolor=blue,urlcolor=blue]{hyperref}
\usepackage[nameinlink,capitalize,noabbrev]{cleveref}

\setlist[itemize]{leftmargin=2em}
\setlist[enumerate]{leftmargin=2em}

\newtheorem{theorem}{Theorem}[section]
\newtheorem{proposition}[theorem]{Proposition}
\newtheorem{lemma}[theorem]{Lemma}

\newtheorem{assumption}[theorem]{Assumption}

\theoremstyle{definition}
\newtheorem{definition}[theorem]{Definition}
\newtheorem{remark}[theorem]{Remark}

\newcommand{\R}{\mathbb{R}}

\newcommand{\E}{\mathbb{E}}
\newcommand{\Prob}{\mathbb{P}}
\newcommand{\calE}{\mathcal{E}}
\newcommand{\calP}{\mathcal{P}}
\newcommand{\calL}{\mathcal{L}}
\newcommand{\calM}{\mathcal{M}}
\newcommand{\dd}{\,\mathrm{d}}

\newcommand{\clock}{\mathsf{s}}

\newcommand{\softmax}{\mathrm{softmax}}
\newcommand{\diag}{\mathrm{diag}}
\newcommand{\supp}{\mathrm{supp}}
\newcommand{\argmin}{\mathrm{argmin}}
\newcommand{\Lip}{\mathrm{Lip}}
\newcommand{\Var}{\mathrm{Var}}

\newcommand{\inner}[2]{\left\langle #1,#2 \right\rangle}
\newcommand{\norm}[1]{\left\lVert #1 \right\rVert}
\newcommand{\abs}[1]{\left\lvert #1 \right\rvert}

\title{\bfseries
A Mean-Field Analysis of Multi-Head Self-Attention\\
under Cross-Entropy Training
}

\author{
Cheng Huan\thanks{Department of Statistics and Data Science,
The Chinese University of Hong Kong. Email: chenghuan@cuhk.edu.hk} and Hongwei Yuan\thanks{Corresponding author. Department of Mathematics, University of Macau. Email: hwyuan@um.edu.mo}}
\date{}

\begin{document}

\maketitle

\begin{abstract}
This paper develops a mean-field theory for a simplified single-layer causal multi-head self-attention model trained by cross-entropy minimization. Each attention head is treated as a particle in parameter space, and the empirical law of the heads is used as the large-head state variable. In the infinite-head limit, the averaged attention logits define a risk functional on probability measures, whose first variation generates a nonlinear Wasserstein gradient-flow equation. Unlike classical mean-field analyses of shallow networks that often focus on square-loss regression, the present model contains the softmax residual from the cross-entropy objective and the query-key-value structure of masked self-attention. We prove a static finite-head approximation bound for the optimal risk, characterize global minimizers through a variational support condition, and establish a quantitative finite-time propagation-of-chaos estimate comparing finite-head stochastic gradient descent with the limiting PDE. We then study the long-time behavior of the PDE: energy dissipation, convergence to the stationary set under compactness, convergence to a single stationary measure under topological or Kurdyka--{\L}ojasiewicz assumptions, and explicit convergence rates under gradient-domination conditions. Finally, we prove local exponential stability under a Wasserstein strong-monotonicity condition and give verifiable stability and instability criteria for Dirac stationary measures. The results provide a rigorous baseline mean-field framework for attention-head training and clarify the additional compactness, landscape, and curvature assumptions needed to pass from stationarity to convergence and stability.
\end{abstract}

\noindent\textbf{Keywords:}
mean-field limit; multi-head self-attention; cross-entropy loss; Wasserstein gradient flow; stochastic gradient descent; stationary solution; convergence rate; stability.

\section{Introduction}

Attention mechanisms were introduced as trainable alignment maps for sequence-to-sequence models by \citet{BahdanauChoBengio2015}. The Transformer architecture of \citet{Vaswani2017} elevated self-attention from an auxiliary alignment module to the main mechanism for exchanging information between tokens. Since then, self-attention has become a central component of modern representation learning and language modeling, including BERT-type bidirectional pretraining \citep{DevlinChangLeeToutanova2019}, large autoregressive language models \citep{BrownMannRyder2020}, and empirical scaling-law studies \citep{KaplanMcCandlishHenighan2020}. These models are commonly trained with cross-entropy objectives for masked-token or next-token prediction. Despite this empirical success, the mathematical description of how attention heads move during training remains far less developed than the corresponding theory for shallow neural networks.

Mean-field theory offers a natural way to study overparameterized models by interpreting neurons, units, or attention heads as particles in parameter space. The empirical distribution of these particles may converge, under suitable scaling, to a deterministic measure-valued evolution. This viewpoint has been developed extensively for two-layer and shallow networks; see, for example, \citet{MeiMontanariNguyen2018}, \citet{SirignanoSpiliopoulos2020}, \citet{RotskoffVandenEijnden2022}, and the optimal-transport formulation of \citet{ChizatBach2018}. The present work adapts this philosophy to a masked multi-head self-attention model trained with a cross-entropy loss. This is different from the square-loss regression setting studied in much of the classical shallow-network mean-field literature, including \cite{MeiMontanariNguyen2018, SirignanoSpiliopoulos2020, RotskoffVandenEijnden2022}: here the loss is the token-level cross-entropy loss, and its gradient contributes the softmax residual \(\softmax(z_l^\rho)-Y_l\) rather than a linear residual from a squared error. The adaptation is not merely notational: the drift contains the softmax residual \(\softmax(z_l^\rho)-Y_l\), the feature map \(\psi_l(X;\omega)\) is generated by query-key-value attention, and all heads interact through the shared vocabulary logits. Consequently, the relevant limiting equation is a nonlinear transport equation on the space of probability measures over attention-head parameters.

The mean-field regime considered here is different from the neural tangent kernel (NTK) or lazy-training regimes of \citet{JacotGabrielHongler2018} and \citet{ChizatOyallonBach2019}. In the NTK description, the model is effectively linearized around initialization and training is governed by an almost fixed tangent kernel. In contrast, our mean-field scaling keeps the nonlinear movement of the empirical distribution of attention heads. This distinction is important for attention models because contextual relations are represented by the redistribution of heads in the query-key-value parameter space. The large-system limit is taken in the number of heads, while the internal dimension of a single head is fixed. Thus the normalization that keeps logits bounded is the \(1/N\) average over heads, not an architectural constraint such as \(d_k=d_{\mathrm{model}}/N\). The latter relation is useful in practical Transformer implementations to keep the total computational budget fixed, but it is not needed for the mathematical limit studied here.

We study a simplified single-layer masked self-attention architecture without feed-forward blocks, residual connections, or layer normalization. For a head parameter
\begin{align*}
\omega=(W_Q,W_K,W_V,W_O),
\end{align*}
the associated logit feature is
\begin{align*}
\psi_l(X;\omega)=\bigl[A_\omega(X)XW_VW_OW_{\mathrm{out}}\bigr]_{l,:},
\end{align*}
where \(A_\omega(X)\) is the row-wise masked attention matrix. For a probability measure \(\rho\) over head parameters, the mean-field logits are obtained by averaging this feature:
\begin{align*}
z_l^\rho(X)=\int \psi_l(X;\omega)\rho(\dd\omega).
\end{align*}
The cross-entropy risk \(\calE(\rho)\) is then a functional on \(\calP(K)\), and its first variation \(\Phi(\omega;\rho)\) generates the limiting Wasserstein gradient-flow equation
\begin{equation}
\label{eq:intro-pde}
\partial_t\rho_t
=
2\zeta(t)\nabla_\omega\cdot
\Bigl(
\rho_t\nabla_\omega\Phi(\omega;\rho_t)
\Bigr).
\end{equation}
This equation is the central object of the paper. It describes the deterministic evolution of the distribution of attention heads in the infinite-head limit and provides the reference dynamics for finite-head stochastic gradient descent.

The paper contains four groups of results. First, we prove a static mean-field approximation theorem (see \Cref{thm:static-approx}) showing that the optimal finite-head risk approaches the mean-field optimum at the Monte Carlo rate \(O(N^{-1/2})\). We also characterize global minimizers by a support condition (see \Cref{thm:minimizer-support}): a minimizer must put mass only on global minimizers of the first variation \(\Phi(\cdot;\rho_\ast)\). This condition is stronger than the stationarity condition and is essential for distinguishing convergence to stationary measures from convergence to globally optimal predictors.

Second, we derive the mean-field training dynamics and prove a quantitative propagation-of-chaos estimate comparing finite-head SGD with the limiting PDE over a fixed time horizon (see \Cref{thm:sgd-to-pde}). The error separates the Euler discretization contribution, the stochastic-gradient martingale contribution, and the empirical-measure fluctuation of the independent nonlinear characteristics. This finite-time theorem is the dynamical counterpart of the static approximation result and explains precisely how the number of heads, the learning-rate step size, and the probability parameter enter the approximation.

Third, we analyze the long-time behavior of the limiting PDE. We prove energy dissipation (see \Cref{thm:energy-dissipation}), characterize stationary solutions (see \Cref{thm:stationary}), establish convergence to the stationary set under \(W_1\)-precompactness (see \Cref{thm:omega-limit}), and give a checkable compact-support criterion for this precompactness (see Proposition \ref{prop:w1-precompactness}). We then show that convergence to a single stationary measure follows when the relevant stationary set is totally disconnected (see statement 5 in \Cref{thm:omega-limit}), and we derive explicit rates under a Kurdyka-{\L}ojasiewicz (KL) inequality (see \Cref{thm:kl-rate}).

Fourth, we study stability of stationary measures. A local Wasserstein strong monotonicity condition yields exponential stability in \(W_2\) (see \Cref{thm:wasserstein-stability}). For Dirac stationary measures, we give a more concrete criterion: positivity of the local Hessian \(H_0=\nabla_\omega^2\Phi(\omega_\ast;\delta_{\omega_\ast})\) implies local exponential stability of \(\delta_{\omega_\ast}\), while a negative direction of the translation Hessian \(H_{\mathrm{tr}}\) implies instability (see \Cref{thm:dirac-stability} and \Cref{thm:instability}). These results make explicit which curvature conditions are sufficient for stable attention-head concentration and which directions can destabilize a stationary head.

The analysis is intentionally restricted to a simplified architecture. The feed-forward block, residual connection, layer normalization, and trainable output projection are omitted so that the interaction among attention heads can be isolated. This restriction should be viewed as a model problem rather than a full theory of large language models. Nevertheless, the simplified setting retains several core mechanisms of attention training: masked token-to-token interaction, averaging over heads, cross-entropy residuals, nonlinear mean-field drift, and the gap between stationarity and optimality. The results therefore provide a rigorous baseline for future mean-field analyses of more complete Transformer architectures.

\section{Notation and model}

\subsection{Basic notation}

Let \(d_\omega\) denote the dimension of the parameter vector of a single attention head. The parameter space is a subset of \(\R^{d_\omega}\). We write \(\calP(\R^{d_\omega})\) for the space of Borel probability measures on \(\R^{d_\omega}\). For \(p\geq 1\), \(W_p\) denotes the \(p\)-Wasserstein distance.

For a finite collection of parameters \(\omega_1,\ldots,\omega_N\), the empirical measure is
\begin{equation}
\label{eq:emp-measure}
\widehat\rho^N
=
\frac{1}{N}\sum_{i=1}^N\delta_{\omega_i}.
\end{equation}
For a measurable function \(f\), we write
\begin{align*}
\langle f,\rho\rangle=\int f(\omega)\rho(\dd\omega).
\end{align*}

For \(z\in\R^d\), define
\begin{equation}
\label{eq:softmax}
\softmax_j(z)
=
\frac{\exp(z_j)}{\sum_{m=1}^{d}\exp(z_m)},
\qquad
j=1,\ldots,d.
\end{equation}
If \(p=\softmax(z)\), the Jacobian of the softmax map is
\begin{equation}
\label{eq:softmax-jacobian}
\nabla\softmax(z)
=
\diag(p)-pp^\top.
\end{equation}

\begin{lemma}[Lipschitz property of softmax]
\label{lem:softmax-lip}
For every \(z\in\R^{d}\),
\begin{align*}
0\preceq \nabla\softmax(z)\preceq \frac{1}{2}I.
\end{align*}
Consequently,
\begin{align*}
\norm{\softmax(z)-\softmax(z')}_2
\leq
\frac{1}{2}\norm{z-z'}_2
\end{align*}
for all \(z,z'\in\R^{d}\).
\end{lemma}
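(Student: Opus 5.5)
We work directly with the Jacobian formula \eqref{eq:softmax-jacobian}. Fix \(z\in\R^d\) and write \(p=\softmax(z)\), so that \(p_j>0\) and \(\sum_j p_j=1\). Set \(J=\diag(p)-pp^\top\), which is symmetric. For any \(v\in\R^d\) we have
\begin{align*}
v^\top J v=\sum_j p_j v_j^2-\Bigl(\sum_j p_j v_j\Bigr)^2 .
\end{align*}
This is exactly the variance of \(v_J\) when the index \(J\) is drawn with law \(p\).

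\emph{Lower bound.} Nonnegativity of this variance gives \(J\succeq 0\) immediately; equivalently, one can apply Cauchy--Schwarz with weights \(p_j\).

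\emph{Upper bound.} This is the main step, since the eigenvalues of \(J\) are not explicit. We need \(\Var_p(v)\le \tfrac12\norm{v}_2^2\). We use the identity
\begin{align*}
\Var_p(v)=\tfrac12\sum_{j,k}p_jp_k(v_j-v_k)^2 .
\end{align*}
Combined with \((v_j-v_k)^2\le 2(v_j^2+v_k^2)\), this gives
\begin{align*}
\Var_p(v)\le\sum_{j\neq k}p_jp_k(v_j^2+v_k^2)=2\sum_j p_j(1-p_j)v_j^2 .
\end{align*}
That only gives a constant of \(1/2\) because \(p_j(1-p_j)\le 1/4\), and the constants need to be checked. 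A cleaner route is to subtract the best constant. Taking \(c=(\max_j v_j+\min_j v_j)/2\), we get \(\Var_p(v)\le \sum_j p_j(v_j-c)^2\le (\max_j v_j-\min_j v_j)^2/4\) (Popoviciu's inequality). Since \((v_a-v_b)^2\le 2(v_a^2+v_b^2)\le 2\norm{v}_2^2\), this yields \(v^\top Jv\le \tfrac12\norm{v}_2^2\), i.e.\ \(J\preceq \tfrac12 I\). I expect this constant bookkeeping to be the only delicate point. Popoviciu's inequality is elementary: the variance is at most the expectation of \((v_J-c)^2\) for any constant \(c\), and each term satisfies \(\abs{v_j-c}\le(\max_j v_j-\min_j v_j)/2\).

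\emph{Lipschitz estimate.} The operator norm of the symmetric matrix \(J\), which satisfies \(0\preceq J\preceq\tfrac12 I\), is at most \(1/2\). The softmax map is smooth, so by the fundamental theorem of calculus along the segment \(z_s=z'+s(z-z')\),
\begin{align*}
\softmax(z)-\softmax(z')=\int_0^1\nabla\softmax(z_s)(z-z')\dd s .
\end{align*}
Taking norms then gives \(\norm{\softmax(z)-\softmax(z')}_2\le\tfrac12\norm{z-z'}_2\).
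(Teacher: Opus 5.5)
Your proof is correct and follows the paper's argument: the variance interpretation of \(v^\top(\diag(p)-pp^\top)v\), Popoviciu's bound \(\tfrac14(\max_j v_j-\min_j v_j)^2\), the estimate \((\max_j v_j-\min_j v_j)^2\le 2\norm{v}_2^2\), and integration along the segment for the Lipschitz bound. Incidentally, your first route via \(2\sum_j p_j(1-p_j)v_j^2\le\tfrac12\norm{v}_2^2\) already gives exactly the required constant \(1/2\), so your hesitation there was unnecessary.
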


\begin{proof}
Let \(p=\softmax(z)\). For any \(u\in\R^{d}\),
\begin{align*}
u^\top\bigl(\diag(p)-pp^\top\bigr)u
=
\sum_{j=1}^{d}p_j u_j^2
-
\left(\sum_{j=1}^{d}p_j u_j\right)^2
=
\Var_p(u_J),
\end{align*}
where \(J\) is a random index with law \(p\). Hence the matrix is positive semidefinite. Moreover,
\begin{align*}
\Var_p(u_J)
\leq
\frac{1}{4}\bigl(\max_j u_j-\min_j u_j\bigr)^2
\leq
\frac{1}{2}\norm{u}_2^2,
\end{align*}
because \(\max_j u_j-\min_j u_j\leq \sqrt{2}\norm{u}_2\). This proves the spectral bound. The Lipschitz estimate follows from the mean-value theorem.
\end{proof}

\subsection{Single-head attention feature map}

Let
\begin{align*}
X=(x_1,\ldots,x_L)^\top\in\R^{L\times d_x}
\end{align*}
be an input token sequence. We consider a simplified masked self-attention layer without feed-forward block, residual connection, or layer normalization.

The parameter of a single attention head is
\begin{align*}
\omega=(W_Q,W_K,W_V,W_O)\in \mathbb{R}^{d_x \times d_k} \times \mathbb{R}^{d_x \times d_k} \times \mathbb{R}^{d_x \times d_k} \times \mathbb{R}^{d_k \times d_x}.
\end{align*}
For notational simplicity, all matrices are absorbed into a vector \(\omega\in\R^{d_\omega}\). Given \(\omega\), define
\begin{align*}
Q=XW_Q,
\qquad
K=XW_K,
\qquad
V=XW_V.
\end{align*}
Let \(\calM\in(\{0,-\infty\})^{L\times L}\) be the causal mask defined by
\begin{equation}
\label{eq:causal-mask}
\calM_{rs}
=
\begin{cases}
0, & 1\leq s\leq r\leq L,\\
-\infty, & 1\leq r<s\leq L.
\end{cases}
\end{equation}
Thus, at position \(r\), tokens \(s\leq r\) are visible and future tokens \(s>r\) receive attention weight zero after the row-wise softmax. The row-wise attention matrix is
\begin{equation}
\label{eq:attention-matrix}
A_\omega(X)
=
\softmax_{\mathrm{row}}
\left(
\frac{QK^\top}{\sqrt{d_k}}+\calM
\right).
\end{equation}
The single-head output after the output projection and the fixed vocabulary projection is
\begin{equation}
\label{eq:single-head-feature}
\Psi(X;\omega)
=
A_\omega(X)XW_VW_OW_{\mathrm{out}}
\in\R^{L\times d_v}.
\end{equation}
We denote the \(l\)-th row by
\begin{align*}
\psi_l(X;\omega)\in\R^{d_v},
\end{align*}
and its \(j\)-th coordinate by \(\psi_{l,j}(X;\omega)\).

In practical language models, these matrices have the following roles. The query matrix \(W_Q\) maps each token representation to the vector used to search for relevant previous tokens. The key matrix \(W_K\) maps tokens to vectors against which queries are compared, so the scores \(QK^\top/\sqrt{d_k}\) determine which context positions are attended to. The value matrix \(W_V\) maps tokens to the information that is aggregated after the attention weights are chosen. The output projection \(W_O\) maps the aggregated value of one head back to the model dimension and mixes the coordinates produced inside the head. Finally, \(W_{\mathrm{out}}\in\R^{d_x\times d_v}\) maps the model representation to vocabulary logits. In full Transformer implementations, \(W_{\mathrm{out}}\) is usually trained jointly with the attention parameters, but here it is fixed so that the only interacting particles are the attention heads.

% \begin{remark}
% The fixed matrix \(W_{\mathrm{out}}\) is introduced to keep the analysis focused on the mean-field behavior of the attention heads. 
% \end{remark}

\subsection{Mean-field logits and cross-entropy risk}

Let \(Y_l(X)\in\R^{d_v}\) be the one-hot next-token label at position \(l\). For \(\rho\in\calP(\R^{d_\omega})\), define the mean-field logits
\begin{equation}
\label{eq:mean-field-logits}
z_l^\rho(X)
=
\int_{\R^{d_\omega}}\psi_l(X;\omega)\rho(\dd\omega)
\in\R^{d_v}.
\end{equation}

For \(y\in\R^{d_v}\) one-hot and \(z\in\R^{d_v}\), define the cross-entropy loss
\begin{equation}
\label{eq:ce-loss}
\ell(y,z)
=
-\sum_{j=1}^{d_v}y_j\log\softmax_j(z)
=
\log\left(\sum_{j=1}^{d_v}e^{z_j}\right)-y^\top z.
\end{equation}
Then
\begin{align*}
\nabla_z\ell(y,z)=\softmax(z)-y.
\end{align*}
The mean-field risk is
\begin{equation}
\label{eq:mean-field-risk}
\calE(\rho)
=
\E_X
\left[
\frac{1}{L}\sum_{l=1}^L
\ell\bigl(Y_l(X),z_l^\rho(X)\bigr)
\right].
\end{equation}
For a finite \(N\)-head model, define
\begin{align*}
\calE_N(\omega_1,\ldots,\omega_N)
=
\calE(\widehat\rho^N).
\end{align*}

\subsection{First variation}

The first variation of \(\calE\) is the function
\begin{equation}
\label{eq:first-variation}
\Phi(\omega;\rho)
=
\E_X
\left[
\frac{1}{L}
\sum_{l=1}^L
\left\langle
\softmax(z_l^\rho(X))-Y_l(X),
\psi_l(X;\omega)
\right\rangle
\right].
\end{equation}
Indeed, if \(\rho_\varepsilon=(1-\varepsilon)\rho+\varepsilon\nu\), then
\begin{equation}
\label{eq:first-var-identity}
\left.
\frac{\dd}{\dd\varepsilon}
\calE(\rho_\varepsilon)
\right|_{\varepsilon=0}
=
\int \Phi(\omega;\rho)(\nu-\rho)(\dd\omega).
\end{equation}
Moreover,
\begin{equation}
\label{eq:grad-phi}
\nabla_\omega\Phi(\omega;\rho)
=
\E_X
\left[
\frac{1}{L}
\sum_{l=1}^L
\nabla_\omega\psi_l(X;\omega)^\top
\bigl(
\softmax(z_l^\rho(X))-Y_l(X)
\bigr)
\right],
\end{equation}
where \(\nabla_\omega\psi_l(X;\omega)\in\R^{d_v\times d_\omega}\).

\section{Assumptions}

The following assumptions are used throughout the paper.

\begin{assumption}[Learning-rate profile]
\label{ass:learning-rate}
The function \(\zeta:[0,\infty)\to[0,\infty)\) is bounded and Lipschitz. Moreover,
\begin{align*}
\int_0^\infty \zeta(t)\dd t=\infty.
\end{align*}
When exponential convergence in physical time is stated, we additionally assume that there exists \(\zeta_->0\) such that
\begin{align*}
\zeta(t)\geq \zeta_-
\qquad
\text{for all }t\geq 0.
\end{align*}
\end{assumption}

\begin{assumption}[Smoothness on invariant compact sets]
\label{ass:smoothness}
There exist two compact sets \(K\subset\R^{d_\omega}\), \(K_x\subset\R^{d_x}\) such that all measures considered below are supported in \(K\) and all input tokens considered below are in \(K_x\). On \(K\), the maps
\begin{align*}
\omega\mapsto \psi_l(X;\omega)
\end{align*}
are \(C^3\) for all \(l\in\{1,\ldots,L\}\) and almost every \(X\in K_x^{L}\), and there exists \(B_\psi<\infty\) %(depending on $d_v$) 
such that
\begin{align*}
\sup_{X\in K_x^{L},l\in\{1,\ldots,L\},\omega\in K}
\left(
\norm{\psi_l(X;\omega)}
+
\norm{\nabla_\omega\psi_l(X;\omega)}
+
\norm{\nabla_\omega^2\psi_l(X;\omega)}
+
\norm{\nabla_\omega^3\psi_l(X;\omega)}
\right)
\leq B_\psi.
\end{align*}
\end{assumption}

\begin{remark}
For finite-time propagation-of-chaos estimates, it is enough to assume that the particle trajectories remain in a compact set on the relevant time interval. For the long-time convergence results, compactness can be guaranteed either by an invariant compact parameter domain, by an a priori confinement estimate, or locally by the stability theorem in \Cref{thm:dirac-stability}.
\end{remark}

\begin{proposition}[Attention features satisfy the smoothness assumption locally]
\label{prop:attention-smooth}

Assume that \(\norm{X}\leq M_X\), \(\norm{\omega}\leq M_\omega\), and \(W_{\mathrm{out}}\) is fixed. Then the attention feature map \(\omega\mapsto\psi_l(X;\omega)\) defined in \eqref{eq:single-head-feature} is smooth. Moreover, for every integer \(m\geq0\) there exists a constant
\begin{align*}
C_m=C_m(M_X,M_\omega,\norm{W_{\mathrm{out}}},d_k,L)
\end{align*}
such that
\begin{align*}
\sup_{\norm{X}\leq M_X,l\in\{1,\ldots,L\},\norm{\omega}\leq M_\omega}
\norm{\nabla_\omega^m\psi_l(X;\omega)}
\leq C_m.
\end{align*}
In particular, Assumption \ref{ass:smoothness} holds for \(K=B(0,M_\omega)\) and \(K_x=B(0,M_X)\).
\end{proposition}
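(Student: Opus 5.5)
The plan is to write \(\psi_l(X;\omega)\) as a composition of elementary smooth maps and then use compactness. Write \(x_s\) for the \(s\)-th token. For fixed \(X\) and \(l\), the attention scores in row \(l\) are
\begin{align*}
s_{ls}(\omega)=\frac{x_l^\top W_Q W_K^\top x_s}{\sqrt{d_k}},\qquad 1\le s\le l.
\end{align*}
Because of the causal mask \eqref{eq:causal-mask}, the masked entries \(s>l\) get weight exactly zero. So row \(l\) of \(A_\omega(X)\) is the ordinary softmax \eqref{eq:softmax} on \(\R^l\) applied to \((s_{l1},\ldots,s_{ll})\), padded with zeros. The \(-\infty\) entries never enter a derivative. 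Hence
\begin{align*}
\psi_l(X;\omega)=\sum_{s=1}^l \softmax_s\bigl(s_{l1}(\omega),\ldots,s_{ll}(\omega)\bigr)\,x_s^\top W_VW_OW_{\mathrm{out}}.
\end{align*}
The scores are polynomials of degree two in \(\omega\), and the value part \(x_s^\top W_VW_OW_{\mathrm{out}}\) is a polynomial of degree two in \(\omega\). Softmax is real-analytic on \(\R^l\). The map is therefore a finite sum of products and compositions of \(C^\infty\) maps, so it is smooth in \(\omega\), and jointly smooth in \((X,\omega)\).

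For the bounds, I would argue in two steps.

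\textbf{Step 1: a qualitative bound from compactness.} For each \(m\), \(\nabla_\omega^m\psi_l\) is continuous in \((X,\omega)\) on the compact set \(\{\norm{X}\le M_X\}\times\{\norm{\omega}\le M_\omega\}\), so its supremum is finite. Taking the maximum over the finitely many \(l\le L\) gives \(C_m\).

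\textbf{Step 2: explicit dependence on the parameters.} To show \(C_m\) depends only on \((M_X,M_\omega,\norm{W_{\mathrm{out}}},d_k,L)\), I would apply the multivariate Leibniz and Faà di Bruno formulas.

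1. Every derivative of softmax of order \(k\) is a polynomial in the probabilities \(p_j\in[0,1]\). It is therefore bounded by a constant \(c_{k,L}\) that does not depend on the input. For \(k=1\) this is Lemma \ref{lem:softmax-lip}.
2. The scores have first derivatives bounded by \(M_X^2M_\omega/\sqrt{d_k}\), second derivatives bounded by \(M_X^2/\sqrt{d_k}\), and vanishing higher derivatives.
3. The value factor and its derivatives are bounded by polynomials in \(M_X\), \(M_\omega\) and \(\norm{W_{\mathrm{out}}}\).

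Combining these gives \(C_m\) as a polynomial expression in those quantities, with combinatorial factors depending on \(m\) and \(L\).

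The final claim follows by taking \(m=0,1,2,3\) and \(B_\psi=C_0+C_1+C_2+C_3\). The \(C^3\) regularity of Assumption \ref{ass:smoothness} then holds for every \(X\) with \(K=B(0,M_\omega)\) and \(K_x=B(0,M_X)\). Token norms are controlled by \(\norm{X}\) up to a dimensional constant.

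The only delicate point is the mask. One must note that \(-\infty\) entries only remove coordinates, so no limiting argument is needed. One must also check that the softmax derivative bounds are uniform in the unbounded input, which holds because they depend only on \(p\). Everything else is routine bookkeeping.
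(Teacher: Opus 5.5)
Your proposal is correct and takes the same route as the paper's proof: the scores and the value path \(x_s^\top W_VW_OW_{\mathrm{out}}\) depend polynomially on \(\omega\), softmax is smooth, and repeated chain and product rules finish the argument. You also fill in two points the paper leaves implicit: the causal mask only deletes coordinates, and softmax derivatives are polynomials in \(p\), hence bounded uniformly in the input. One small refinement for the final clause: Assumption \ref{ass:smoothness} quantifies over \(X\in K_x^L\), i.e.\ token-wise bounds \(\norm{x_s}\leq M_X\), which do not imply \(\norm{X}\leq M_X\). Your Step 2 uses only token norms, so it covers that case directly; also, \(\norm{x_s}\leq\norm{X}\) holds with constant one for the Frobenius or operator norm, so no \(d_x\)-dependent constant is needed.
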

\begin{remark}
In $C_m(M_X,M_\omega,\norm{W_{\mathrm{out}}},d_k,L)$, \(L\) appears because each row of the row-wise softmax is a function of \(L\) attention scores and the derivatives of \(A_\omega(X)\) involve sums over the \(L\) token positions. There is no separate dependence on \(d_x\) or \(d_v\) because \(M_X\), \(M_\omega\), and \(\norm{W_{\mathrm{out}}}\) are dimension-aware norm bounds: the dependence on the input dimension is absorbed by the bounds on \(X\) and the parameter matrices, while the dependence on the vocabulary dimension is absorbed by the operator norm of the fixed linear map \(W_{\mathrm{out}}\). 
\end{remark}

\begin{proof}
The map \(\omega\mapsto QK^\top/\sqrt{d_k}\) is polynomial in \(\omega\), and the row-wise softmax is \(C^\infty\) with bounded derivatives on bounded sets. The products with \(XW_V\), \(W_O\), and \(W_{\mathrm{out}}\) are polynomial operations. Therefore the claim follows from repeated applications of the chain rule and product rule. The bounds depend only on the displayed quantities. %The first derivative bound also follows directly from Lemma \ref{lem:softmax-lip}.
\end{proof}

\section{Static mean-field approximation}

We first compare the finite-head optimization problem with the mean-field variational problem.

\begin{theorem}[Approximation of the optimal risk]
\label{thm:static-approx}
Under Assumption \ref{ass:smoothness}, there exists a constant \(C<\infty\), depending only on \(B_\psi\) and \(L\), such that
\begin{equation}
\label{eq:static-approx}
0
\leq
\inf_{\omega_1,\ldots,\omega_N\in K}\calE_N(\omega_1,\ldots,\omega_N)
-
\inf_{\rho\in\calP(K)}\calE(\rho)
\leq
\frac{C}{\sqrt{N}}.
\end{equation}
%The dependence on \(L\) comes from controlling the loss uniformly over the \(L\) prediction positions, and the dependence on \(d_v\) comes from estimating deviations of \(d_v\)-dimensional logit vectors in Euclidean norm. If \(B_\psi\) is chosen as a dimension-free bound for the full vector \(\psi_l\), then the \(d_v\)-dependence is absorbed into \(B_\psi\) and hence into \(C\).
\end{theorem}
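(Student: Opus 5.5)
The lower bound is immediate. Every empirical measure \(\widehat\rho^N\) with atoms in \(K\) belongs to \(\calP(K)\), so \(\calE_N(\omega_1,\ldots,\omega_N)=\calE(\widehat\rho^N)\geq\inf_{\rho\in\calP(K)}\calE(\rho)\) for every configuration. For the upper bound, I will fix \(\varepsilon>0\) and choose \(\rho_\varepsilon\in\calP(K)\) with \(\calE(\rho_\varepsilon)\leq\inf\calE+\varepsilon\). I will then draw \(\omega_1,\ldots,\omega_N\) i.i.d.\ from \(\rho_\varepsilon\) and show
\begin{align*}
\E\bigl[\calE(\widehat\rho^N)\bigr]-\calE(\rho_\varepsilon)\leq \frac{C}{\sqrt N},
\end{align*}
with \(C\) independent of \(\varepsilon\) and \(\rho_\varepsilon\). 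Since some realization achieves at most the expectation, and \(\varepsilon\) is arbitrary, this gives the claim. This is the probabilistic method: no minimizer needs to exist.

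The first key step is that the per-position loss \(z\mapsto\ell(y,z)\) is Lipschitz in \(z\). Its gradient is \(\softmax(z)-y\), and
\begin{align*}
\norm{\softmax(z)-y}_2\leq\norm{\softmax(z)}_1+\norm{y}_1=2,
\end{align*}
so \(\ell(y,\cdot)\) is \(2\)-Lipschitz. Hence
\begin{align*}
\abs{\calE(\widehat\rho^N)-\calE(\rho_\varepsilon)}\leq \frac{2}{L}\sum_{l=1}^L\E_X\norm{z_l^{\widehat\rho^N}(X)-z_l^{\rho_\varepsilon}(X)}_2 .
\end{align*}
The second key step takes expectation over the heads and uses Fubini to swap \(\E_\omega\) and \(\E_X\). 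For fixed \(X\) and \(l\), the difference \(z_l^{\widehat\rho^N}(X)-z_l^{\rho_\varepsilon}(X)\) equals \(\frac1N\sum_i(\psi_l(X;\omega_i)-\E\psi_l(X;\omega))\). This is an average of i.i.d.\ centered vectors bounded by \(2B_\psi\) under Assumption \ref{ass:smoothness}. By Jensen's inequality and independence, its expected norm is at most \(\sqrt{\E\norm{\cdot}^2}\leq B_\psi/\sqrt N\), using that the variance is bounded by the second moment \(B_\psi^2\). This yields \(C=2B_\psi\) in the bound above. Alternatively, a second-order Taylor expansion using \(\nabla^2_z\ell\preceq\frac12 I\) from \Cref{lem:softmax-lip} gives the sharper rate \(B_\psi^2/(4N)\), because the first-order term has zero mean. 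I will state the \(N^{-1/2}\) version since it is what the theorem requires.

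The main technical care is measurability. The map \((X,\omega)\mapsto\psi_l(X;\omega)\) must be jointly measurable so that Fubini applies, and the random quantity \(\calE(\widehat\rho^N)\) must be measurable in \((\omega_1,\ldots,\omega_N)\). Both follow from continuity of \(\psi_l\) in \(\omega\) together with its measurability in \(X\), using the explicit attention formula. I also need the bound \(B_\psi\) to hold for almost every \(X\) in the support of the data; this is exactly the uniform bound in Assumption \ref{ass:smoothness}. Beyond this the argument is routine, and the constant depends only on \(B_\psi\) (with at most a harmless \(L\)-dependence if one bounds the sum over positions crudely).
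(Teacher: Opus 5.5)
Your proposal is correct and follows essentially the same route as the paper: you pick a near-minimizer \(\rho_\varepsilon\), sample the heads i.i.d., bound the expected logit deviation by Jensen plus independence, transfer this to the risk through the \(2\)-Lipschitz property of \(\ell(y,\cdot)\), and extract one good realization before letting \(\varepsilon\downarrow0\). Your variance-versus-second-moment bound even gives \(B_\psi/\sqrt N\) instead of the paper's \(2B_\psi/\sqrt N\). Your side remark is also sound and worth keeping: \(\nabla_z^2\ell=\diag(p)-pp^\top\preceq\frac12 I\), and the linear term has zero mean for fixed \(X\), so the expected excess risk is at most \(\E\norm{\Delta}^2/4\leq B_\psi^2/(4N)\). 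This is an \(O(N^{-1})\) rate, which the paper's accompanying remark explicitly chooses not to exploit.
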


\begin{remark}[Comparison with classical mean-field approximation]
This theorem is the static analogue of the approximation step used in mean-field analyses of shallow networks such as \citet{MeiMontanariNguyen2018}, \citet{SirignanoSpiliopoulos2020}, and \citet{RotskoffVandenEijnden2022}. In those works the averaged particles are usually neurons or hidden units, whereas here each particle is an attention head and the averaged object is a vocabulary-logit feature \(\psi_l(X;\omega)\). The present estimate is deliberately stated at the Monte Carlo scale \(O(N^{-1/2})\), because the proof controls
$\E\norm{N^{-1}\sum_{i=1}^N U_i}$
by the standard central-limit-size fluctuation of an empirical average and then uses only the Lipschitz continuity of the cross-entropy loss with respect to the logits. By contrast, the sharper \(O(N^{-1})\)-type bounds that can appear in some two-layer mean-field analyses, including estimates in \citet{MeiMontanariNguyen2018}, exploit additional quadratic or second-order structure, for example a square loss, cancellations after expanding a squared residual, or a more restrictive smoothness/regularity argument for the risk itself. Such cancellations are not used here. The cross-entropy objective contains a logarithmic softmax term and a one-hot linear term; it is globally Lipschitz on bounded logit sets but it is not treated as a uniformly quadratic loss in this argument. Thus the rate reflects the generic empirical approximation of the attention-head logit feature rather than a special second-order approximation of a squared-loss model.
\end{remark}

\begin{proof}
The lower bound is immediate because every empirical measure is an element of \(\calP(K)\).

For the upper bound, fix \(\eta>0\) and choose \(\rho_\eta\in\calP(K)\) such that
\begin{align*}
\calE(\rho_\eta)
\leq
\inf_{\rho\in\calP(K)}\calE(\rho)+\eta.
\end{align*}
Let \(\omega_1,\ldots,\omega_N\) be independent samples from \(\rho_\eta\), and set \(\widehat\rho^N=N^{-1}\sum_i\delta_{\omega_i}\). Fix \(X\) and \(l\). Since
\begin{align*}
z_l^{\widehat\rho^N}(X)-z_l^{\rho_\eta}(X)
=
\frac{1}{N}\sum_{i=1}^N
\left(
\psi_l(X;\omega_i)
-
\E_{\omega\sim\rho_\eta}\psi_l(X;\omega)
\right),
\end{align*}
set
\begin{align*}
U_i(X,l)
=
\psi_l(X;\omega_i)
-
\E_{\omega\sim\rho_\eta}\psi_l(X;\omega).
\end{align*}
Then \(U_i(X,l)\) are independent, mean-zero \(\R^{d_v}\)-valued random variables. By Jensen's inequality and independence,
\begin{align*}
\E_{\omega_1,\ldots,\omega_N}
\norm{
z_l^{\widehat\rho^N}(X)-z_l^{\rho_\eta}(X)
}
&\leq
\left(
\E_{\omega_1,\ldots,\omega_N}
\norm{
\frac{1}{N}\sum_{i=1}^N U_i(X,l)
}^2
\right)^{1/2}\\
&=
\left(
\frac{1}{N^2}
\sum_{i=1}^N
\E\norm{U_i(X,l)}^2
\right)^{1/2},
\end{align*}
because the cross terms vanish. Assumption \ref{ass:smoothness} gives
\(\norm{\psi_l(X;\omega)}\leq B_\psi\), hence
\begin{align*}
\norm{U_i(X,l)}
\leq
\norm{\psi_l(X;\omega_i)}
+
\E_{\omega\sim\rho_\eta}\norm{\psi_l(X;\omega)}
\leq
2B_\psi.
\end{align*}
Therefore
\begin{align}
\label{eq:logit-mc-bound}
\E_{\omega_1,\ldots,\omega_N}
\norm{
z_l^{\widehat\rho^N}(X)-z_l^{\rho_\eta}(X)
}
\leq
\frac{2B_\psi}{\sqrt{N}}.
\end{align}
The bound is uniform in \(X\) and \(l\). Moreover,
\begin{align*}
\norm{\nabla_z\ell(y,z)}_2
=
\norm{\softmax(z)-y}_2
\leq 2,
\end{align*}
so \(z\mapsto \ell(y,z)\) is \(2\)-Lipschitz. Therefore
\begin{align*}
\E_{\omega_1,\ldots,\omega_N}
\left[
\calE(\widehat\rho^N)-\calE(\rho_\eta)
\right]
\leq
\frac{C}{\sqrt{N}}.
\end{align*}
Hence there exists one realization of \(\omega_1,\ldots,\omega_N\) for which
\begin{align*}
\calE_N(\omega_1,\ldots,\omega_N)
\leq
\calE(\rho_\eta)+\frac{C}{\sqrt{N}}.
\end{align*}
Letting \(\eta\downarrow0\) proves the result.
\end{proof}

\begin{theorem}[First-order characterization of global minimizers]
\label{thm:minimizer-support}
Let \(\rho_\ast\in\calP(K)\). Then \(\rho_\ast\) is a global minimizer of \(\calE\) over \(\calP(K)\) if and only if
\begin{equation}
\label{eq:minimizer-support}
\supp(\rho_\ast)
\subset
\argmin_{\omega\in K}\Phi(\omega;\rho_\ast).
\end{equation}
Equivalently,
\begin{align*}
\Phi(\omega;\rho_\ast)
\geq
\int \Phi(\omega';\rho_\ast)\rho_\ast(\dd\omega')
\quad
\text{for all }\omega\in K,
\end{align*}
with equality \(\rho_\ast\)-almost surely.
\end{theorem}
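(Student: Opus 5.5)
The plan is to exploit convexity of \(\calE\) on \(\calP(K)\) together with the first-variation identity \eqref{eq:first-var-identity}. Since \(\rho\mapsto z_l^\rho(X)\) is linear (affine in the mixture sense) and \(z\mapsto\ell(y,z)\) is convex (log-sum-exp minus a linear term), the map \(\varepsilon\mapsto\calE((1-\varepsilon)\rho+\varepsilon\nu)\) is convex on \([0,1]\) for any \(\rho,\nu\in\calP(K)\). Hence
\begin{align*}
\calE(\nu)-\calE(\rho)\geq \left.\frac{\dd}{\dd\varepsilon}\calE(\rho_\varepsilon)\right|_{\varepsilon=0^+}=\int\Phi(\omega;\rho)(\nu-\rho)(\dd\omega).
\end{align*}
Before using this inequality, I will justify \eqref{eq:first-var-identity} rigorously. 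The logits \(z_l^{\rho_\varepsilon}\) are uniformly bounded by \(B_\psi\) via Assumption \ref{ass:smoothness}, and \(\nabla_z\ell\) is bounded by \(2\), so dominated convergence allows differentiating under \(\E_X\).

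\emph{Sufficiency.} Assume \eqref{eq:minimizer-support}. Let \(m=\min_K\Phi(\cdot;\rho_\ast)\), which is attained because \(\Phi(\cdot;\rho_\ast)\) is continuous on the compact set \(K\). Since \(\rho_\ast\) is concentrated on the closed set \(\argmin\Phi\), we get \(\int\Phi\,\dd\rho_\ast=m\le\int\Phi\,\dd\nu\) for every \(\nu\in\calP(K)\). The convexity inequality then gives \(\calE(\nu)\ge\calE(\rho_\ast)\).

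\emph{Necessity.} Let \(\rho_\ast\) be a minimizer. For any \(\nu\in\calP(K)\), the one-sided derivative at \(\varepsilon=0\) of \(\calE(\rho_\varepsilon)\) must be \(\ge0\), so \(\int\Phi\,\dd\nu\ge\int\Phi\,\dd\rho_\ast\). Taking \(\nu=\delta_\omega\) yields \(\Phi(\omega;\rho_\ast)\ge\bar\Phi:=\int\Phi\,\dd\rho_\ast\) for all \(\omega\in K\). This in turn gives \(\bar\Phi\le m\); since trivially \(\bar\Phi\ge m\), we conclude \(\bar\Phi=m\). Consequently \(\Phi-m\ge0\) has zero \(\rho_\ast\)-integral, so \(\Phi=m\) holds \(\rho_\ast\)-a.s. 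By continuity of \(\Phi\), the set \(\{\Phi=m\}\) is closed and carries full mass, hence it contains \(\supp\rho_\ast\).

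\emph{Equivalence of the two formulations.} This is the same argument: the displayed inequality with a.s. equality is equivalent to \(\bar\Phi=m\) together with \(\rho_\ast\)-a.s. minimality, which by closedness is equivalent to the support condition.

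The main obstacle is the continuity of \(\omega\mapsto\Phi(\omega;\rho)\). Assumption \ref{ass:smoothness} gives smoothness only for a.e. \(X\), but the uniform bound on \(\nabla_\omega\psi_l\) makes \(\psi_l(X;\cdot)\) uniformly Lipschitz, so \(\Phi\) is Lipschitz on \(K\). Where \(K\) is nonconvex, I will argue continuity via dominated convergence instead. The only other delicate point is justifying the derivative at the endpoint \(\varepsilon=0\), which the boundedness of the logits handles.
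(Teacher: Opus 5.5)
Your proposal is correct and follows essentially the same route as the paper: convexity of \(\calE\) along mixtures combined with the first-variation identity \eqref{eq:first-var-identity}, Dirac test measures \(\nu=\delta_\omega\) for necessity, and the supporting-line inequality of a convex function for sufficiency. The extra points you handle are details the paper leaves implicit, and your treatment of them is sound: differentiating under \(\E_X\) at the endpoint \(\varepsilon=0\), and continuity of \(\Phi(\cdot;\rho_\ast)\) via dominated convergence, so that the minimum \(m\) is attained, \(\{\Phi=m\}\) is closed, and \(\rho_\ast\)-a.s.\ equality upgrades to the support inclusion.
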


% \begin{remark}[Relation to variational optimality in measure space]
% The support condition is the probability-measure analogue of the first-order optimality conditions used in optimal-transport formulations of overparameterized models, for example \citet{ChizatBach2018}. Its interpretation is different from the finite-dimensional critical-point condition \(\nabla_{\omega_i}\calE_N=0\): a mean-field minimizer must place mass only where the first variation \(\Phi(\cdot;\rho_\ast)\) is minimal. This also differs from the NTK or lazy-training picture of \citet{JacotGabrielHongler2018} and \citet{ChizatOyallonBach2019}, where the features are effectively frozen near initialization.
% \end{remark}

\begin{proof}
The functional \(\calE\) is convex in \(\rho\), because \(z_l^\rho(X)\) is linear in \(\rho\) and the cross-entropy loss \(\ell(y,z)\) is convex in \(z\). Thus \(\rho_\ast\) is a global minimizer if and only if
\begin{align*}
\left.
\frac{\dd}{\dd\varepsilon}
\calE\bigl((1-\varepsilon)\rho_\ast+\varepsilon\nu\bigr)
\right|_{\varepsilon=0}
\geq 0
\end{align*}
for all \(\nu\in\calP(K)\). By \eqref{eq:first-var-identity}, this is equivalent to
\begin{align*}
\int\Phi(\omega;\rho_\ast)\nu(\dd\omega)
\geq
\int\Phi(\omega;\rho_\ast)\rho_\ast(\dd\omega)
\end{align*}
for all \(\nu\in\calP(K)\). Taking \(\nu=\delta_\omega\) gives
\begin{align*}
\Phi(\omega;\rho_\ast)
\geq
\int\Phi(\omega';\rho_\ast)\rho_\ast(\dd\omega')
\quad
\text{for all }\omega\in K.
\end{align*}
Since equality must hold after integrating with respect to \(\rho_\ast\), the support of \(\rho_\ast\) is contained in the set of minimizers of \(\Phi(\cdot;\rho_\ast)\). The converse follows immediately from the same variational inequality.
\end{proof}

\section{Mean-field training dynamics}

\subsection{Finite-head SGD}

For the finite-head model, let
\begin{align*}
\widehat\rho_k^N
=
\frac{1}{N}\sum_{i=1}^N\delta_{\omega_i^k}.
\end{align*}
Given an independent data sample \(X^{k+1}\), define the stochastic gradient
\begin{equation}
\label{eq:stoch-grad}
G_i(\omega^k;X^{k+1})
=
\frac{1}{L}\sum_{l=1}^L
\nabla_\omega\psi_l(X^{k+1};\omega_i^k)^\top
\left(
Y_l(X^{k+1})
-
\softmax(z_l^{\widehat\rho_k^N}(X^{k+1}))
\right).
\end{equation}
Thus
\begin{align}\label{eq:mean-zero_1}
\E\left[
G_i(\omega^k;X^{k+1})\mid \omega^k
\right]
=
-\nabla_\omega\Phi(\omega_i^k;\widehat\rho_k^N).
\end{align}
To match the normalization of \eqref{eq:intro-pde}, we write the SGD update as
\begin{equation}
\label{eq:sgd}
\omega_i^{k+1}
=
\omega_i^k
+
2\varepsilon\zeta(k\varepsilon)
G_i(\omega^k;X^{k+1}).
\end{equation}
The factor \(2\) is a convention and could be absorbed into \(\zeta\).

% A noisy version with weight decay is
% \begin{equation}
% \label{eq:noisy-sgd}
% \omega_i^{k+1}
% =
% \omega_i^k
% +
% 2\varepsilon\zeta(k\varepsilon)
% \left(
% G_i(\omega^k;X^{k+1})-\lambda\omega_i^k
% \right)
% +
% \sqrt{4\beta^{-1}\varepsilon\zeta(k\varepsilon)}\,\xi_i^k,
% \end{equation}
% where \(\xi_i^k\sim N(0,I_{d_\omega})\) are independent.

\subsection{Limiting PDE}

The deterministic mean-field PDE associated with \eqref{eq:sgd} is
\begin{equation}
\label{eq:pde}
\partial_t\rho_t
=
2\zeta(t)\nabla_\omega\cdot
\left(
\rho_t\nabla_\omega\Phi(\omega;\rho_t)
\right).
\end{equation}
Equivalently, if \(W_t\) solves
\begin{equation}
\label{eq:characteristic}
\dot W_t
=
-2\zeta(t)\nabla_\omega\Phi(W_t;\rho_t),
\qquad
\rho_t=\calL(W_t),
\end{equation}
then \(\rho_t\) solves \eqref{eq:pde} in the weak sense.

% The noisy SGD \eqref{eq:noisy-sgd} corresponds formally to the nonlinear Fokker--Planck equation
% \begin{equation}
% \label{eq:noisy-pde}
% \partial_t\rho_t
% =
% 2\zeta(t)\nabla_\omega\cdot
% \left(
% \rho_t\nabla_\omega\Phi_\lambda(\omega;\rho_t)
% \right)
% +
% 2\beta^{-1}\zeta(t)\Delta_\omega\rho_t,
% \qquad
% \Phi_\lambda(\omega;\rho)
% =
% \Phi(\omega;\rho)+\frac{\lambda}{2}\norm{\omega}^2.
% \end{equation}

\begin{definition}[Weak solution]
A narrowly continuous curve \(t\mapsto\rho_t\in\calP(K)\) is a weak solution of \eqref{eq:pde} if, for every \(f\in C^1(K)\),
\begin{equation}
\label{eq:weak-pde}
\frac{\dd}{\dd t}\int f(\omega)\rho_t(\dd\omega)
=
-2\zeta(t)
\int
\inner{\nabla f(\omega)}{\nabla_\omega\Phi(\omega;\rho_t)}
\rho_t(\dd\omega)
\end{equation}
in the sense of distributions in time.
\end{definition}

\subsection{Propagation-of-chaos estimate}

Define
\begin{equation}
\label{eq:error-term}
\begin{aligned}
\operatorname{Err}_{N,\varepsilon,T}(z)
:=&\ \varepsilon
+
\sqrt{\varepsilon}
\left[
\sqrt{d_\omega}
+
\sqrt{\log N}
+
z
\right]+
\frac{1}{\sqrt N}
\left[
\sqrt L
+
\sqrt{\log\bigl(\lfloor T/\varepsilon\rfloor+1\bigr)}
+
z
\right].
\end{aligned}
\end{equation}
The three terms correspond respectively to the Euler time-discretization error, the stochastic-gradient martingale error, and the empirical-measure error of the independent nonlinear characteristics. 
%This definition keeps the optimized square-root logarithmic factors coming from the choices of \(r\) in \eqref{Choose_r} and \(s\) in \eqref{Choose_s}.

\begin{theorem}[Mean-field approximation of SGD]
\label{thm:sgd-to-pde}
Assume Assumption \ref{ass:learning-rate} and \ref{ass:smoothness}. Let \(\omega_i^0,\,i\in\{1,\ldots,N\}\) be independent with common law \(\rho_0\), and let $\omega_i^k$ be defined by the SGD update \eqref{eq:sgd} and \(\rho_t\) be the solution of \eqref{eq:pde} with initial law \(\rho_0\). Then for every fixed \(T>0\), there exists \(C_T<\infty\), depending only on \(T\), the compact sets \(K\) and \(K_x\), the bounds in Assumptions \ref{ass:learning-rate} and \ref{ass:smoothness}, the sequence length \(L\) and the parameter dimension \(d_\omega\), but not on \(N\), \(\varepsilon\), \(z\), or the test function \(f\), such that, for every $f:\mathbb{R}^{d_\omega} \rightarrow \mathbb{R}$ with $\mathrm{Lip}(f) \leq 1$ and every $\varepsilon \leq 1$,
\begin{equation}
\label{eq:w1-sgd-bound}
\sup_{k \in [0 , \, T/\varepsilon] \cap \mathbb{N}} \bigg|\frac{1}{N} \sum_{i=1}^N f(\omega_i^k) - \int f(\omega) \rho_{k \varepsilon}(\mathrm{d}\omega) \bigg| 
\leq
C_T\operatorname{Err}_{N,\varepsilon,T}(z),
\end{equation}
and
\begin{equation}
\label{eq:risk-sgd-bound}
\sup_{k \in [0 , \, T/\varepsilon] \cap \mathbb{N}}
\abs{
\calE_N(\omega_1^k,\ldots,\omega_N^k)
-
\calE(\rho_{k\varepsilon})
}
\leq
C_T\operatorname{Err}_{N,\varepsilon,T}(z),
\end{equation}
with probability at least \(1-e^{-z^2}\). 
%The same estimates hold for the noisy SGD \eqref{eq:noisy-sgd}, with \eqref{eq:pde} replaced by \eqref{eq:noisy-pde}, provided the initial law is sub-Gaussian and \(\beta\geq 1\), \(\lambda\leq 1\).
\end{theorem}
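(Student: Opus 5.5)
We will follow the coupling scheme of \citet{MeiMontanariNguyen2018}. The comparison passes through three intermediate particle systems driven by the same initial conditions \(\omega_i^0\sim\rho_0\).

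\begin{enumerate}
\item[(a)] The \emph{nonlinear characteristics} \(\bar W_i(t)\) solve \eqref{eq:characteristic} with \(\bar W_i(0)=\omega_i^0\). They are i.i.d.\ with law \(\rho_t\).
\item[(b)] The \emph{continuous-time particle system} \(\underline W_i(t)\) solves \(\dot{\underline W}_i=-2\zeta(t)\nabla_\omega\Phi(\underline W_i;\underline\rho^N_t)\), where \(\underline\rho_t^N\) is the empirical measure of these particles.
\item[(c)] The \emph{Euler discretization} \(\tilde W_i^k\) uses the exact drift \(-2\varepsilon\zeta(k\varepsilon)\nabla_\omega\Phi(\tilde W_i^k;\tilde\rho^N_k)\).
\end{enumerate}

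The SGD iterates \(\omega_i^k\) differ from (c) only by the martingale increments \(2\varepsilon\zeta(k\varepsilon)\bigl(G_i+\nabla_\omega\Phi(\omega_i^k;\widehat\rho^N_k)\bigr)\), which are mean zero by \eqref{eq:mean-zero_1}. The target quantity is bounded by
\begin{align*}
\max_i\norm{\omega_i^k-\bar W_i(k\varepsilon)}+\Bigl|\tfrac1N\textstyle\sum_i f(\bar W_i(k\varepsilon))-\int f\dd\rho_{k\varepsilon}\Bigr|.
\end{align*}
The risk bound \eqref{eq:risk-sgd-bound} then follows because \(\calE\) is Lipschitz in the logits (the \(2\)-Lipschitz property of \(\ell\) from the proof of \Cref{thm:static-approx}), and \(z_l^{\rho}\) is Lipschitz in \(W_1\) with constant \(B_\psi\). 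Hence the risk error is controlled by the same quantity, applied with the test function \(\psi_l/B_\psi\) coordinatewise.

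The basic regularity input comes from Assumption~\ref{ass:smoothness} and \Cref{lem:softmax-lip}. Together they give \(\norm{\nabla_\omega\Phi}\le B_\psi\), and they give Lipschitz bounds
\begin{align*}
\norm{\nabla_\omega\Phi(\omega;\rho)-\nabla_\omega\Phi(\omega';\rho')}\le C\bigl(\norm{\omega-\omega'}+\sup_{X,l}\norm{z_l^\rho(X)-z_l^{\rho'}(X)}\bigr).
\end{align*}
For two empirical measures this reduces to \(C(\norm{\omega-\omega'}+\max_i\norm{\omega_i-\omega_i'})\).

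Each leg is controlled as follows.
\begin{itemize}
\item \emph{Euler error.} The drift along (b) is Lipschitz in time, using that \(\zeta\) is Lipschitz and \(\norm{\dot{\underline W}}\le C\). Discrete Gronwall over \(T/\varepsilon\) steps then gives an error \(C_T\varepsilon\) deterministically.
\item \emph{Martingale error.} Set \(\Delta_k=\max_i\norm{\omega_i^k-\tilde W_i^k}\). We write \(\omega_i^k-\tilde W_i^k\) as a Lipschitz drift difference plus \(M_i^k=\sum_{j<k}2\varepsilon\zeta(j\varepsilon)\xi_i^j\), where the \(\xi_i^j\) are bounded by \(C\) (bounded gradients on \(K\)) and are martingale differences. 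A vector Azuma--Hoeffding inequality, in the form \(\norm{M_i^k}\le C\sqrt{T\varepsilon}(\sqrt{d_\omega}+s)\) with probability \(1-e^{-s^2}\), is applied with a union bound over \(i\le N\) and over \(k\); a Doob maximal form avoids the union over \(k\). This yields \(\sqrt{\varepsilon}(\sqrt{d_\omega}+\sqrt{\log N}+z)\), and Gronwall closes \(\Delta_k\le C_T\sup_{j\le k}\max_i\norm{M_i^j}\).
\item \emph{Mean-field error.} Bound \(\max_i\norm{\underline W_i(t)-\bar W_i(t)}\) by Gronwall in terms of \(\int_0^t\sup_{X,l}\norm{z_l^{\bar\rho^N_s}(X)-z_l^{\rho_s}(X)}\dd s\), where \(\bar\rho^N_s\) is the empirical measure of the i.i.d.\ characteristics. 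We expect this to be the main obstacle, because the supremum over \(X\) and \(l\) cannot be taken naively.
\end{itemize}

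For the mean-field error, our first attempt is to avoid a sup over \(X\). Instead we linearize the drift difference: \(\nabla_\omega\Phi(\omega;\bar\rho^N_s)-\nabla_\omega\Phi(\omega;\rho_s)\) is \(\E_X\) of a bounded bilinear expression in \(\nabla\psi_l(X;\omega)\) and \(\frac1N\sum_j U_j(X,l)\). Its size is then controlled by a vector-valued McDiarmid/Hilbert-space concentration bound for the empirical average of i.i.d.\ bounded random functions \(\omega\mapsto\psi_l(\cdot;\omega)\) in \(L^2(X)\). This gives order \((\sqrt L+s)/\sqrt N\), with the \(\sqrt L\) coming from summing over the positions. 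If a uniform-in-\(\omega\) bound is needed, we add a covering of the compact set \(K\); its cost enters \(C_T\) through \(d_\omega\). Time uniformity comes from a grid of \(\lfloor T/\varepsilon\rfloor+1\) times, a union bound (the \(\sqrt{\log(\lfloor T/\varepsilon\rfloor+1)}\) term), and Lipschitz continuity in time between grid points.

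Finally, \(\frac1N\sum_i f(\bar W_i(k\varepsilon))-\int f\dd\rho_{k\varepsilon}\) for the fixed 1-Lipschitz \(f\) is handled by Hoeffding: the summands are i.i.d.\ with range bounded by \(\mathrm{diam}(K)\). A union bound over the same time grid then gives \((\sqrt{\log(T/\varepsilon+1)}+z)/\sqrt N\). Collecting the three legs with \(s=z+\sqrt{\log(\text{number of events})}\) and splitting the failure probability \(e^{-z^2}\) among them produces \(C_T\operatorname{Err}_{N,\varepsilon,T}(z)\).
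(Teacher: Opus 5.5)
Your three-leg coupling is a sound route, but it is organized differently from the paper's proof. The paper introduces no auxiliary particle systems. It writes one recursion for $e_i^k=\omega_i^k-\bar\omega_i(t_k)$ in which three terms appear together: the $O(\varepsilon^2)$ local truncation error of the characteristics, the martingale increment, and the measure mismatch. It splits the mismatch into a particle-location part, absorbed into $\max_i\norm{e_i^k}$, and the fluctuation of the i.i.d.\ characteristics, evaluated only at grid times. A single discrete Gronwall then closes everything. That fluctuation is handled by Jensen plus McDiarmid on the scalar $\E_X\max_l\norm{\cdot}$, which is the same estimate as your $L^2$-valued concentration.

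Your triangulation (SGD versus exact-drift Euler, Euler versus the $N$-particle ODE, ODE versus characteristics) separates the three error sources cleanly. It also gives SGD-versus-GD and continuous-time propagation-of-chaos bounds as by-products. The price is twofold. First, the auxiliary systems (b) and (c) must also stay where the bounds of Assumption~\ref{ass:smoothness} hold, and you have to arrange this, for example with bounds on a neighborhood of $K$ (available from Proposition~\ref{prop:attention-smooth} when $K$ is a ball) plus a stopping-time argument. Second, the continuous-time mean-field leg needs your time-interpolation step, which the paper avoids. Separately, the covering of $K$ you hedge on is unnecessary: bounding $\norm{\softmax(z')-\softmax(z)}\le\frac12\norm{z'-z}$ inside $\E_X$ already makes the drift mismatch uniform in the evaluation point.

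The one step that fails as written is the risk bound. Estimate \eqref{eq:w1-sgd-bound} holds for a fixed $f$ on an $f$-dependent event, because of its Hoeffding part. By contrast, $\abs{\calE(\widehat\rho_k^N)-\calE(\rho_{t_k})}$ involves the test functions $\psi_{l,j}(X;\cdot)/B_\psi$ for a continuum of inputs $X$ simultaneously, integrated over the data distribution. You cannot invoke the fixed-$f$ bound once per $X$ and then integrate. Routing it through the $W_1$-Lipschitz continuity of $z_l^\rho$ does not help either. That needs a bound on $W_1(\widehat\rho_k^N,\rho_{t_k})$ itself, and for i.i.d.\ samples in dimension $d_\omega\ge 3$ this can decay as slowly as $N^{-1/d_\omega}$, not $N^{-1/2}$.

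The repair uses only what your mean-field leg already provides. Consider the $f$-independent event on which the $L^2$ concentration of the i.i.d.\ characteristics holds at all grid times $t_k=k\varepsilon$. On that event,
\[
\abs{\calE(\widehat\rho_k^N)-\calE(\rho_{t_k})}
\le
C\max_{i}\norm{\omega_i^k-\bar W_i(t_k)}
+C\,\E_X\Bigl[\frac1L\sum_{l=1}^L\Bigl\lVert\frac1N\sum_{j=1}^N U_j^{(k)}(X,l)\Bigr\rVert\Bigr],
\]
where $U_j^{(k)}(X,l)=\psi_l(X;\bar W_j(t_k))-\int\psi_l(X;\omega)\rho_{t_k}(\dd\omega)$. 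Both terms are already bounded by $C_T\operatorname{Err}_{N,\varepsilon,T}(z)$ on that event. This is exactly how the paper proceeds, reusing its bound on $B_k$ rather than the test-function estimate.
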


\begin{remark}[Relation to propagation of chaos]
The estimate is a finite-time propagation-of-chaos statement in the spirit of \citet{Sznitman1991}. The distinctive point here is that the drift contains the cross-entropy residual \(\softmax(z_l^\rho)-Y_l\) and the masked attention feature map. Thus the theorem compares SGD for finitely many heads with a nonlinear continuity equation for the law of a representative attention head, rather than with a fixed kernel dynamics as in the NTK regime.
\end{remark}

\begin{proof}
Write
\begin{align*}
t_k=k\varepsilon,
\qquad
\zeta_k=\zeta(t_k),
\qquad
b(t,\omega,\rho)=-2\zeta(t)\nabla_\omega\Phi(\omega;\rho).
\end{align*}
Let \(\bar\omega_i(t)\) be the nonlinear characteristic
\begin{align*}
\dot{\bar\omega}_i(t)
=
b(t,\bar\omega_i(t),\rho_t),
\qquad
\bar\omega_i(0)=\omega_i^0.
\end{align*}
Since the initial variables are independent with common law \(\rho_0\) and the characteristic equation is driven only by the deterministic law \(\rho_t\), the variables \(\bar\omega_i(t)\) are independent with common law \(\rho_t\) for every \(t\geq0\).

We first record the Lipschitz estimates used below. By Assumption \ref{ass:smoothness}, Lemma \ref{lem:softmax-lip}, 
there is a constant \(C_K\) such that, for all \(\omega,\omega'\in K\) and \(\rho,\rho'\in\calP(K)\),
\begin{align}
\label{eq:phi-lip-proof}
\norm{
\nabla_\omega\Phi(\omega;\rho)-\nabla_\omega\Phi(\omega';\rho')
}
\leq
C_K\left(
\norm{\omega-\omega'}+\E_X
\left[\max_{l\in\{1,\ldots,L\}}\norm{\int_{\R^{d_\omega}}\psi_l(X;\omega)(\rho-\rho')(\dd\omega)
}\right]\right).
\end{align}
Indeed, the \(\omega\)-dependence is controlled by the uniform second-derivative bound on \(\psi_l\). For the measure argument, observe that
\begin{align*}
\norm{z_l^\rho(X)-z_l^{\rho'}(X)}
=
\norm{
\int \psi_l(X;\omega)(\rho-\rho')(\dd\omega)
}
\end{align*}
 and then the Lipschitz continuity of softmax gives \eqref{eq:phi-lip-proof}. The same estimates imply the local truncation bound
\begin{align}
\label{eq:char-local-error}
\norm{
\bar\omega_i(t_{k+1})-\bar\omega_i(t_k)
-\varepsilon b(t_k,\bar\omega_i(t_k),\rho_{t_k})
}
\leq
C_T\varepsilon^2,
\end{align}
where \(C_T\) depends on \(T\), the Lipschitz constant of \(\zeta\), and the constants in Assumption \ref{ass:smoothness}.

Define the error
\begin{align*}
e_i^k=\omega_i^k-\bar\omega_i(t_k)
\end{align*}
and the martingale increment
\begin{align*}
\Delta M_i^{k+1}
=
2\varepsilon\zeta_k
\left[
G_i(\omega^k;X^{k+1})
+
\nabla_\omega\Phi(\omega_i^k;\widehat\rho_k^N)
\right].
\end{align*}
Then \(\E[\Delta M_i^{k+1}\mid\omega^k]=0\) by \eqref{eq:mean-zero_1}. %Indeed, conditional on \(\omega^k\), the only remaining randomness is the fresh sample \(X^{k+1}\), which is independent of the past, and \eqref{eq:grad-phi} gives
% \begin{align*}
% \E\left[G_i(\omega^k;X^{k+1})\mid\omega^k\right]
% =
% -\nabla_\omega\Phi(\omega_i^k;\widehat\rho_k^N).
% \end{align*}
%Therefore the two terms inside the brackets in the definition of \(\Delta M_i^{k+1}\) have conditional expectation zero. 
Subtracting the characteristic increment from the SGD update gives
\begin{align*}
e_i^{k+1}
=&\ e_i^k
-2\varepsilon\zeta_k
\left[
\nabla_\omega\Phi(\omega_i^k;\widehat\rho_k^N)
-
\nabla_\omega\Phi(\bar\omega_i(t_k);\rho_{t_k})
\right]
+\Delta M_i^{k+1}
+r_i^{k+1},
\end{align*}
with \(\norm{r_i^{k+1}}\leq C_T\varepsilon^2\) by \eqref{eq:char-local-error}. Iterating this equality and using $e_i^0=0$ gives 
\begin{align*}
e_i^{k+1}
=&\ 
-2\varepsilon\sum_{m=0}^k\left(\zeta_m
\left[
\nabla_\omega\Phi(\omega_i^m;\widehat\rho_m^N)
-
\nabla_\omega\Phi(\bar\omega_i(t_m);\rho_{t_m})
\right]
+\Delta M_i^{m+1}
+r_i^{m+1}\right),
\end{align*}
Hence \eqref{eq:phi-lip-proof} yields
\begin{align}
\label{eq:error-recursion-one-step}
\norm{e_i^{k+1}}
\leq&
C_K\varepsilon\sum_{m=0}^k\left(\norm{e_i^m}+A_m\right)
+\norm{M_i^{k+1}}
+C_T\varepsilon^2(k+1).
\end{align}
where \(M_i^m=\sum_{q=0}^{m-1}\Delta M_i^{q+1}\) and
\begin{align*}
A_k
=
\E_X
\left[
\max_{l\in\{1,\ldots,L\}}
\norm{
\int_{\R^{d_\omega}} \psi_l(X;\omega)(\widehat\rho_k^N-\rho_{t_k})(\dd\omega)
}
\right].
\end{align*}
Using the discrete Gronwall inequality gives
\begin{align}
\label{eq:e-bound-before-gronwall}
\max_{i\leq N}\norm{e_i^k}
\leq
C_T\left[
\varepsilon
+
\max_{i\leq N}
\norm{M_i^k}
+
\varepsilon\sum_{m=0}^{k-1}
A_m
\right],
\end{align}

We next control the martingale term $\max_{i\leq N}
\norm{M_i^k}$. The softmax residual satisfies
\begin{align*}
\norm{Y_l(X)-\softmax(z_l^{\widehat\rho_k^N}(X))}\leq 2,
\end{align*}
and \(\nabla_\omega\psi_l\) is bounded on \(K\). Therefore
\begin{align*}
\norm{G_i(\omega^k;X^{k+1})}
+
\norm{\nabla_\omega\Phi(\omega_i^k;\widehat\rho_k^N)}
\leq C_K,
\end{align*}
so the martingale increments are bounded by \(C_K\varepsilon\). To derive the required martingale bound, fix \(i\) and a unit vector \(u\in\R^{d_\omega}\). The scalar process \(\inner{u}{M_i^k}\) is a martingale with increments
\begin{align*}
\abs{\inner{u}{\Delta M_i^{q+1}}}\leq C_K\varepsilon,
\end{align*}
and predictable quadratic variation bounded by
\begin{align*}
\sum_{q< T/\varepsilon}
\E\left[
\inner{u}{\Delta M_i^{q+1}}^2\mid \omega^q
\right]
\leq C_T\varepsilon.
\end{align*}
Therefore, Azuma--Hoeffding's inequality 
% says that if a scalar martingale has increments bounded by \(a_q\), then
% \begin{align*}
% \Prob\left(\sup_{m\leq n}\abs{S_m}>r\right)
% \leq
% 2\exp\left(-\frac{r^2}{2\sum_{q=1}^n a_q^2}\right),
% \end{align*}
% where the supremum version follows by applying the same inequality to the stopped martingale. Since here \(a_q\leq C_K\varepsilon\) and \(n\leq T/\varepsilon\), the denominator is bounded by \(C_T\varepsilon\). 
% Freedman's inequality gives the slightly sharper variance-sensitive form
% \begin{align*}
% \Prob\left(\sup_{m\leq n}S_m>r\right)
% \leq
% \exp\left(
% -\frac{r^2}{2(V_n+C_K\varepsilon r/3)}
% \right),
% \qquad
% V_n=\sum_{q=1}^n\E[(S_q-S_{q-1})^2\mid\mathcal F_{q-1}],
% \end{align*}
% and the bound \(V_n\leq C_T\varepsilon\) again yields the sub-Gaussian estimate, for \(0<r\leq 1\),
gives
\begin{align}
\label{eq:scalar-martingale-bound}
\Prob\left(
\sup_{0\leq k\leq T/\varepsilon}
\abs{\inner{u}{M_i^k}}>r
\right)
\leq
2\exp\left(-\frac{r^2}{C_T \varepsilon}\right).
\end{align}

Let \(\mathcal N\subset S^{d_\omega-1}\) be a \(1/2\)-net of the unit sphere, meaning that for every \(u\in S^{d_\omega-1}\) there exists \(v\in\mathcal N\) such that \(\norm{u-v}\leq1/2\). The standard volumetric covering argument gives \(\abs{\mathcal N}\leq (1+2/(1/2))^{d_\omega}=5^{d_\omega}\). For any \(w\in\R^{d_\omega}\), if \(u=w/\norm{w}\), choose \(v\in\mathcal N\) with \(\norm{u-v}\leq1/2\); then
\begin{align*}
\norm{w}
=
\inner{u}{w}
\leq
\abs{\inner{v}{w}}+\norm{u-v}\norm{w}
\leq
\max_{v\in\mathcal N}\abs{\inner{v}{w}}+\frac12\norm{w},
\end{align*}
and therefore \(\norm{w}\leq2\max_{v\in\mathcal N}\abs{\inner{v}{w}}\). Applying \eqref{eq:scalar-martingale-bound} to all \(v\in\mathcal N\) and all \(i\leq N\), we obtain
\begin{align*}
&\Prob\left(
\max_{i\leq N}\max_{v\in\mathcal N}
\sup_{0\leq k\leq T/\varepsilon}
\abs{\inner{v}{M_i^k}}>r
\right) \\
&\qquad\leq
2N\abs{\mathcal N}
\exp\left(-\frac{r^2}{C_T\varepsilon}\right)
\leq
2\exp\left(
\log N+d_\omega\log 5-\frac{r^2}{C_T\varepsilon}
\right).
\end{align*}
Set $2\exp\left(
\log N+d_\omega\log 5-\frac{r^2}{C_T\varepsilon}
\right)=\frac12 e^{-z^2}$, which means that we choose
\begin{align}\label{Choose_r}
r
=
\sqrt{C_T\varepsilon\left(z^2+\log N+d_\omega\log 5+\log 4\right)}.
\end{align}
Then 
\begin{align}\nonumber
    \max_{i\leq N}\sup_{0\leq k\leq T/\varepsilon}\norm{M_i^k}&\leq 2\max_{i\leq N}\max_{v\in\mathcal N}
\sup_{0\leq k\leq T/\varepsilon}
\abs{\inner{v}{M_i^k}}\\\nonumber
&<2r=2\sqrt{C_T\varepsilon\left(z^2+\log N+d_\omega\log 5+\log 4\right)}\\\label{eq:martingale-concentration}
&\leq C_T\sqrt{\varepsilon}\left[\sqrt{d_\omega}+\sqrt{\log N}+z\right]
\end{align} 
with probability at least $1-\frac12 e^{-z^2}$. 

To estimate \(A_k\), we split it into a particle-location error and an empirical-characteristic error:
\begin{align}
\label{eq:Ak-split}
A_k
&\leq
\E_X\max_l
\norm{\frac1N\sum_{i=1}^N
\left[
\psi_l(X;\omega_i^k)-\psi_l(X;\bar\omega_i(t_k))
\right]}
\nonumber\\
&\quad+
\E_X\max_l
\norm{\frac1N\sum_{i=1}^N\psi_l(X;\bar\omega_i(t_k))
-
\int\psi_l(X;\omega)\rho_{t_k}(\dd\omega)}.
\end{align}
The first term is bounded by \(C_K \max_{i\leq N}\norm{e_i^k}\), because \(\omega\mapsto\psi_l(X;\omega)\) is uniformly Lipschitz on \(K\).  The second term will be controlled by concentration for the independent characteristics.

Let
\begin{align*}
\bar\rho_k^N=\frac1N\sum_{i=1}^N\delta_{\bar\omega_i(t_k)}
\end{align*}
and define
\begin{align*}
B_k
=
\E_X\max_{l\leq L}
\norm{
\int\psi_l(X;\omega)(\bar\rho_k^N-\rho_{t_k})(\dd\omega)}.
\end{align*}
For each fixed \(k\), the variables \(\bar\omega_i(t_k)\) are independent with common law \(\rho_{t_k}\).  First, by Jensen's inequality and independence,
\begin{align*}
\E B_k
&\leq
\left(
\E\E_X\sum_{l=1}^L
\norm{
\frac1N\sum_{i=1}^N
\left[
\psi_l(X;\bar\omega_i(t_k))-
\int\psi_l(X;\omega)\rho_{t_k}(\dd\omega)
\right]
}^2
\right)^{1/2}  \\
&\leq
C B_\psi\sqrt{\frac{L}{N}}.
\end{align*}
Second, the map \((\bar\omega_1(t_k),\ldots,\bar\omega_N(t_k))\mapsto B_k\) has bounded differences: changing one coordinate changes \(B_k\) by at most \(C_K B_\psi/N\).  Hence McDiarmid's bounded-difference inequality gives
\begin{align*}
\Prob\left(B_k-\E B_k>s\right)
\leq
\exp\left(-\frac{Ns^2}{C_K^2 B_\psi^2}\right).
\end{align*}
Choosing
\begin{align}
\label{Choose_s}
s
=
C_K B_\psi N^{-1/2}
\sqrt{\log(\lfloor T/\varepsilon\rfloor+1)+z^2}
\end{align}
and taking a union bound over the grid \(k\in[0,T/\varepsilon]\cap\mathbb N\), we obtain, with probability at least \(1-e^{-z^2}/2\),
\begin{align}
\label{eq:empirical-characteristic-concentration}
\sup_{0\leq k\leq T/\varepsilon}B_k
\leq
C_KN^{-1/2}
\left[
\sqrt L+\sqrt{\log(\lfloor T/\varepsilon\rfloor+1)}+z
\right].
\end{align}

On the intersection of the two high-probability events \eqref{eq:martingale-concentration} and \eqref{eq:empirical-characteristic-concentration}, which has probability at least \(1-e^{-z^2}\), combine \eqref{eq:e-bound-before-gronwall}, \eqref{eq:martingale-concentration}, \eqref{eq:Ak-split} and \eqref{eq:empirical-characteristic-concentration} to get
\begin{align}
\label{eq:E-final-bound}
\sup_{0\leq k\leq T/\varepsilon}\max_{i\leq N}\norm{e_i^k}
\leq
C_T\operatorname{Err}_{N,\varepsilon,T}(z).
\end{align}

We now prove the test-function estimate \eqref{eq:w1-sgd-bound}.  Fix a function \(f:\R^{d_\omega}\to\R\) with \(\Lip(f)\leq1\).  Since constants cancel in empirical-measure differences, we may replace \(f\) by \(f-f(\omega_0)\) for some fixed \(\omega_0\in K\), and hence \(\abs f\leq \operatorname{diam}(K)\) on \(K\).  For every \(k\),
\begin{align*}
&\left|\frac1N\sum_{i=1}^N f(\omega_i^k)-\int f(\omega)\rho_{t_k}(\dd\omega)\right|\\
&\quad\leq
\frac1N\sum_{i=1}^N\abs{f(\omega_i^k)-f(\bar\omega_i(t_k))}
+
\left|\frac1N\sum_{i=1}^N f(\bar\omega_i(t_k))-\int f(\omega)\rho_{t_k}(\dd\omega)\right|\\
&\quad\leq
\max_{i\leq N}\norm{e_i^k}
+
\left|\frac1N\sum_{i=1}^N f(\bar\omega_i(t_k))-\int f(\omega)\rho_{t_k}(\dd\omega)\right|.
\end{align*}
For the second term, Hoeffding's inequality and a union bound over the grid \(k\in[0,T/\varepsilon]\cap\mathbb N\) give, with probability at least \(1-e^{-z^2}/2\),
\begin{align}
\label{eq:test-function-characteristic-concentration}
\sup_{0\leq k\leq T/\varepsilon}
\left|\frac1N\sum_{i=1}^N f(\bar\omega_i(t_k))-\int f(\omega)\rho_{t_k}(\dd\omega)\right|
\leq
C_TN^{-1/2}
\left[
\sqrt{\log(\lfloor T/\varepsilon\rfloor+1)}+z
\right].
\end{align}
Enlarging constants and intersecting this event with the event used above gives \eqref{eq:w1-sgd-bound}.  This final intersection only changes the numerical constant in the probability level, which is absorbed by replacing \(z\) by a comparable value or, equivalently, by increasing \(C_T\).

It remains to prove the risk estimate.  For each \(X\) and \(l\), the loss \(z\mapsto\ell(Y_l(X),z)\) is \(2\)-Lipschitz on \(\R^{d_v}\).  Therefore
\begin{align*}
\abs{\calE(\widehat\rho_k^N)-\calE(\rho_{t_k})}
&\leq
C\E_X\frac1L\sum_{l=1}^L
\norm{
\int\psi_l(X;\omega)(\widehat\rho_k^N-\rho_{t_k})(\dd\omega)}\\
&\leq
C_T \max_{i\leq N}\norm{e_i^k}
+
C_T N^{-1/2}
\left[
\sqrt{\log(\lfloor T/\varepsilon\rfloor+1)}+z
\right].
\end{align*}
The last line uses the same decomposition as in \eqref{eq:Ak-split} and the empirical-characteristic estimate \eqref{eq:empirical-characteristic-concentration}.  Taking the supremum over \(k\leq T/\varepsilon\) and using \eqref{eq:E-final-bound} proves \eqref{eq:risk-sgd-bound}, because \(\calE_N(\omega_1^k,\ldots,\omega_N^k)=\calE(\widehat\rho_k^N)\). This completes the proof of the theorem.
\end{proof}

\section{Long-time behavior of the limiting PDE}

This section studies the limiting PDE \eqref{eq:pde}. The central question is whether \(\rho_t\) converges to a stationary solution as \(t\to\infty\), and if so, at what rate.

\subsection{Energy dissipation and stationary solutions of the limiting PDE}
Define the dissipation functional
\begin{equation}
\label{eq:dissipation}
\mathcal I(\rho)
=
\int
\norm{\nabla_\omega\Phi(\omega;\rho)}^2
\rho(\dd\omega).
\end{equation}

\begin{theorem}[Energy dissipation]
\label{thm:energy-dissipation}
Let \(\rho_t\) solve \eqref{eq:pde}. Then
\begin{equation}
\label{eq:energy-dissipation}
\frac{\dd}{\dd t}\calE(\rho_t)
=
-2\zeta(t)\mathcal I(\rho_t)
\leq 0.
\end{equation}
Consequently, \(\calE(\rho_t)\) is non-increasing in time.
\end{theorem}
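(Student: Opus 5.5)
The plan is a chain-rule computation: I will show that \(t\mapsto\calE(\rho_t)\) is absolutely continuous, compute its derivative through the first variation \(\Phi\), and then plug in the weak formulation \eqref{eq:weak-pde} with test function \(f=\Phi(\cdot;\rho_t)\).

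\emph{Step 1: reduce to the logits.} For fixed \(X\) and \(l\), the logit curve
\begin{align*}
t\mapsto z_l^{\rho_t}(X)=\int\psi_l(X;\omega)\rho_t(\dd\omega)
\end{align*}
is handled componentwise. Each coordinate \(\psi_{l,j}(X;\cdot)\) lies in \(C^1(K)\) with \(C^1\) norm at most \(B_\psi\), by Assumption \ref{ass:smoothness}. So \eqref{eq:weak-pde} applies with \(f=\psi_{l,j}(X;\cdot)\) and gives
\begin{align*}
\frac{\dd}{\dd t}z_l^{\rho_t}(X)
=
-2\zeta(t)\int\nabla_\omega\psi_l(X;\omega)\,\nabla_\omega\Phi(\omega;\rho_t)\,\rho_t(\dd\omega).
\end{align*}
The right-hand side is bounded by \(2\norm{\zeta}_\infty B_\psi^2\cdot 2\), using \(\norm{\nabla_\omega\Phi}\le 2B_\psi\) from \eqref{eq:grad-phi} and the bound \(\norm{\softmax-Y}\le2\). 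Hence the logit curves are Lipschitz in \(t\), uniformly in \((X,l)\).

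\emph{Step 2: differentiate the risk.} The map \(z\mapsto\ell(y,z)\) is \(C^1\) with gradient \(\softmax(z)-y\). The chain rule for the composition of a \(C^1\) function with a Lipschitz curve therefore gives, for a.e. \(t\),
\begin{align*}
\frac{\dd}{\dd t}\ell\bigl(Y_l,z_l^{\rho_t}\bigr)=\inner{\softmax(z_l^{\rho_t})-Y_l}{\tfrac{\dd}{\dd t}z_l^{\rho_t}}.
\end{align*}
The derivatives are uniformly bounded, so dominated convergence, or equivalently writing things in integrated form and using Fubini, lets me exchange \(\E_X\frac1L\sum_l\) with \(\frac{\dd}{\dd t}\). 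Swapping the \(\E_X\) and \(\rho_t\) integrals, again by boundedness and Fubini, and recognizing \eqref{eq:grad-phi}, I get
\begin{align*}
\frac{\dd}{\dd t}\calE(\rho_t)
=
-2\zeta(t)\int\Bigl\langle \E_X\tfrac1L\textstyle\sum_l\nabla_\omega\psi_l^\top(\softmax(z_l^{\rho_t})-Y_l),\,\nabla_\omega\Phi(\omega;\rho_t)\Bigr\rangle\rho_t(\dd\omega).
\end{align*}
The expectation inside the bracket is exactly \(\nabla_\omega\Phi(\omega;\rho_t)\), so the right-hand side equals \(-2\zeta(t)\mathcal I(\rho_t)\). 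This is \(\le0\) since \(\zeta\ge0\).

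\emph{Step 3: from a.e. to everywhere, and monotonicity.} I expect the main obstacle to be the regularity bookkeeping: the weak formulation holds only in the distributional sense in time, and the derivative must pass through the nonlinear softmax. I will handle it by working in integrated form. Step 1 shows that \(t\mapsto z_l^{\rho_t}(X)\) is Lipschitz, hence absolutely continuous. Steps 1 and 2 then give
\begin{align*}
\calE(\rho_t)-\calE(\rho_s)=-\int_s^t2\zeta(r)\mathcal I(\rho_r)\dd r.
\end{align*}
Monotonicity of \(\calE(\rho_t)\) follows at once. Narrow continuity of \(\rho_t\), together with continuity of \(\nabla_\omega\Phi\) in \(\rho\) (the estimate \eqref{eq:phi-lip-proof}, whose measure term is controlled by narrow convergence because \(\psi_l\) is continuous and bounded on \(K\)), shows that \(r\mapsto\zeta(r)\mathcal I(\rho_r)\) is continuous. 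The identity \eqref{eq:energy-dissipation} then holds pointwise for every \(t\).
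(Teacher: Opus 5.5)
Your proposal is correct and carries out the same computation as the paper's proof (chain rule through the first variation, then the weak formulation), but with the regularity actually justified. The paper formally writes $\frac{\dd}{\dd t}\calE(\rho_t)=\int\Phi(\omega;\rho_t)\,\partial_t\rho_t(\dd\omega)$, which in effect inserts the time-dependent test function $\Phi(\cdot;\rho_t)$ into \eqref{eq:weak-pde}; you instead test only against the fixed features $\psi_{l,j}(X;\cdot)$, handle the nonlinearity with the finite-dimensional chain rule for $\ell$, reassemble $\nabla_\omega\Phi$ by Fubini, and get the identity for every $t$ from the continuity of $r\mapsto\zeta(r)\mathcal I(\rho_r)$.
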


\begin{remark}[Gradient-flow interpretation]
This identity is the direct analogue of the energy dissipation equality for Wasserstein gradient flows in \citet{AmbrosioGigliSavare2008} and for mean-field neural-network training in optimal-transport form, such as \citet{ChizatBach2018}. The only model-specific ingredient is the first variation \(\Phi\), which encodes the self-attention logits and the cross-entropy loss. The result shows that the limiting PDE is genuinely a risk-decreasing dynamics, not merely a formal large-head limit.
\end{remark}

\begin{proof}
Using the first variation and the weak formulation,
\begin{align*}
\frac{\dd}{\dd t}\calE(\rho_t)
=
\int
\Phi(\omega;\rho_t)\partial_t\rho_t(\dd\omega).
\end{align*}
Substituting \eqref{eq:pde} and integrating by parts gives
\begin{align*}
\frac{\dd}{\dd t}\calE(\rho_t)
=
-2\zeta(t)
\int
\norm{\nabla_\omega\Phi(\omega;\rho_t)}^2
\rho_t(\dd\omega),
\end{align*}
which proves the claim.
\end{proof}

\begin{theorem}[Stationary solutions]
\label{thm:stationary}
A measure \(\bar\rho\in\calP(K)\) is a stationary solution of \eqref{eq:pde} if and only if
\begin{equation}
\label{eq:stationary-condition}
\nabla_\omega\Phi(\omega;\bar\rho)=0
\quad
\bar\rho\text{-almost surely}.
\end{equation}
If \(\nabla_\omega\Phi(\cdot;\bar\rho)\) is continuous, this is equivalent to
\begin{equation}
\label{eq:stationary-support}
\supp(\bar\rho)
\subset
\left\{
\omega\in K:
\nabla_\omega\Phi(\omega;\bar\rho)=0
\right\}.
\end{equation}
\end{theorem}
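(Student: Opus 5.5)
The plan is to work with the weak formulation \eqref{eq:weak-pde} and to read ``stationary solution'' as: the constant curve \(\rho_t\equiv\bar\rho\) is a weak solution of \eqref{eq:pde}. For a constant curve the left-hand side of \eqref{eq:weak-pde} vanishes. So stationarity is equivalent to
\begin{align*}
\zeta(t)\int\inner{\nabla f(\omega)}{\nabla_\omega\Phi(\omega;\bar\rho)}\bar\rho(\dd\omega)=0
\quad\text{for all }f\in C^1(K)\text{ and a.e. }t.
\end{align*}
Since \(\int_0^\infty\zeta=\infty\), \(\zeta\) is not identically zero, so this reduces to the time-independent condition
\begin{align*}
\int\inner{\nabla f}{\nabla_\omega\Phi(\cdot;\bar\rho)}\dd\bar\rho=0
\quad\text{for all }f\in C^1(K).
\end{align*}
If \(\zeta\) vanishes identically on some interval, stationarity there is automatic; I will state the convention that stationarity means the condition holds whenever \(\zeta>0\).

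The ``if'' direction is immediate. If \(\nabla_\omega\Phi(\cdot;\bar\rho)=0\) \(\bar\rho\)-a.s., the integrand vanishes \(\bar\rho\)-a.e. for every \(f\).

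For the ``only if'' direction, the key step is to choose \(f=\Phi(\cdot;\bar\rho)\) itself as the test function. By Assumption \ref{ass:smoothness} and \eqref{eq:grad-phi}, \(\Phi(\cdot;\bar\rho)\) is \(C^1\) (indeed \(C^3\)) on \(K\): differentiation under \(\E_X\) is justified by the uniform bound \(B_\psi\) and the bound \(\norm{\softmax-Y_l}\le 2\). This choice gives
\begin{align*}
\mathcal I(\bar\rho)=\int\norm{\nabla_\omega\Phi(\omega;\bar\rho)}^2\bar\rho(\dd\omega)=0,
\end{align*}
hence \(\nabla_\omega\Phi(\cdot;\bar\rho)=0\) \(\bar\rho\)-a.s. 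The same conclusion also follows from \Cref{thm:energy-dissipation}: a stationary curve has constant energy, so \(2\zeta(t)\mathcal I(\bar\rho)=0\) at some \(t\) with \(\zeta(t)>0\). I will present the direct test-function argument, which avoids relying on the regularity implicit in that theorem.

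For the support characterization, assume \(g:=\nabla_\omega\Phi(\cdot;\bar\rho)\) is continuous. Then \(Z=\{\omega\in K: g(\omega)=0\}\) is closed.
\begin{itemize}
\item If \(\supp\bar\rho\subset Z\), then \(\bar\rho(Z)=1\), which gives \eqref{eq:stationary-condition}.
\item Conversely, suppose \(\omega_0\in\supp\bar\rho\) with \(g(\omega_0)\ne0\). By continuity there is a ball around \(\omega_0\) on which \(g\neq 0\). This ball has positive \(\bar\rho\)-mass because \(\omega_0\) lies in the support, contradicting \(g=0\) \(\bar\rho\)-a.s.
\end{itemize}

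The only delicate point is the admissibility of \(\Phi(\cdot;\bar\rho)\) as a \(C^1(K)\) test function, together with the precise meaning of \(C^1(K)\) on a compact set that may lack smooth boundary. I will handle this by noting that \(\psi_l\) is smooth on a neighborhood of \(K\) (as in \Cref{prop:attention-smooth}), so \(\Phi\) extends to a \(C^1\) function on that neighborhood.
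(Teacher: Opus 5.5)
Your proof is correct and follows the paper's argument: the paper obtains $\mathcal I(\bar\rho)=0$ by applying \Cref{thm:energy-dissipation} to the constant curve, which is the same computation as testing \eqref{eq:weak-pde} with $f=\Phi(\cdot;\bar\rho)$, and it handles the converse by the same observation that the right-hand side of \eqref{eq:weak-pde} vanishes. Your direct test-function version is slightly cleaner, since for a constant curve no chain rule along a moving measure is needed. You also supply the support characterization under continuity, which the paper leaves implicit.
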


\begin{remark}[Stationarity versus optimality]
The condition \(\nabla_\omega\Phi=0\) on the support is the natural equilibrium condition for the transport PDE. Similar distinctions between stationary measures and global minimizers appear in mean-field neural-network analyses such as \citet{MeiMontanariNguyen2018} and \citet{ChizatBach2018}. In the present attention model this distinction is important: \Cref{thm:minimizer-support} requires minimization of \(\Phi\) over all \(K\), whereas stationarity only requires vanishing of its gradient on the current support.
\end{remark}

\begin{proof}
If \(\bar\rho\) is stationary, then \(\rho_t\equiv\bar\rho\). By \Cref{thm:energy-dissipation},
\begin{align*}
0
=
\frac{\dd}{\dd t}\calE(\rho_t)
=
-2\zeta(t)
\int
\norm{\nabla_\omega\Phi(\omega;\bar\rho)}^2
\bar\rho(\dd\omega).
\end{align*}
Since \(\zeta(t)\geq0\) and is not identically zero, this implies \eqref{eq:stationary-condition}. Conversely, if \eqref{eq:stationary-condition} holds, then the right-hand side of the weak formulation \eqref{eq:weak-pde} vanishes for every test function, and therefore \(\rho_t\equiv\bar\rho\) is a weak solution.
\end{proof}

\begin{remark}
Every interior global minimizer satisfying the support condition in \Cref{thm:minimizer-support} is stationary, because differentiability of \(\Phi(\cdot;\rho_\ast)\) implies that its gradient vanishes on the interior of the set where it attains its minimum. The converse is false in general: stationary solutions may be local minimizers, saddle-type stationary measures, or even unstable equilibria.
\end{remark}

\subsection{Long-time convergence of the limiting PDE}

Define the rescaled time
\begin{equation}
\label{eq:clock}
\clock(t)
=
2\int_0^t\zeta(r)\dd r.
\end{equation}
By Assumption \ref{ass:learning-rate}, \(\clock(t)\to\infty\) as \(t\to\infty\). In the \(\clock\)-time variable, the PDE becomes
\begin{align*}
\partial_{\clock}\rho_{\clock}
=
\nabla_\omega\cdot
\left(
\rho_{\clock}\nabla_\omega\Phi(\omega;\rho_{\clock})
\right),
\end{align*}
and
\begin{align*}
\frac{\dd}{\dd \clock}\calE(\rho_{\clock})
=
-\mathcal I(\rho_{\clock}).
\end{align*}

\begin{theorem}[Convergence to the stationary set]
\label{thm:omega-limit}
Assume Assumption \ref{ass:smoothness} and suppose that the trajectory \(\{\rho_t:t\geq0\}\) is precompact in \(W_1\). Then:
\begin{enumerate}
\item The limit
\begin{align*}
\calE_\infty
=
\lim_{t\to\infty}\calE(\rho_t)
\end{align*}
exists.
\item \(\mathcal I(\rho_t)\to0\) as \(t\to\infty\).
\item Every \(W_1\)-limit point of \(\rho_t\) is a stationary solution of \eqref{eq:pde}.
\item The omega-limit set
\begin{align*}
\Omega(\rho_0)
=
\left\{
\bar\rho:
\exists\,t_n\to\infty
\text{ such that }
W_1(\rho_{t_n},\bar\rho)\to0
\right\}
\end{align*}
is nonempty, compact, connected, and contained in the stationary set.
\item If the stationary set intersected with \(\Omega(\rho_0)\) is totally disconnected, in particular if it is finite, then \(\rho_t\) converges in \(W_1\) to a single stationary solution.
\end{enumerate}
\end{theorem}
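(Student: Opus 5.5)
The plan is to work in the rescaled clock \(\clock\) from \eqref{eq:clock}. In that variable the energy identity of \Cref{thm:energy-dissipation} reads \(\frac{\dd}{\dd\clock}\calE(\rho_\clock)=-\mathcal I(\rho_\clock)\). Because \(\clock(t)\to\infty\) by Assumption \ref{ass:learning-rate}, and \(\clock\) is nondecreasing, limit points along \(t\to\infty\) and along \(\clock\to\infty\) coincide.

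\textbf{Statement 1.} The energy is bounded below: \(\calE\geq0\), since cross-entropy is nonnegative. It is also nonincreasing, so \(\calE_\infty\) exists. Integrating the identity gives
\begin{align*}
\int_0^\infty\mathcal I(\rho_\clock)\dd\clock\leq\calE(\rho_0)<\infty .
\end{align*}

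\textbf{Statement 2.} To upgrade integrability to \(\mathcal I\to0\), I would show that \(\clock\mapsto\mathcal I(\rho_\clock)\) is uniformly Lipschitz and then apply a Barbalat-type argument. Two facts give this.
\begin{enumerate}
\item The velocity field \(\nabla_\omega\Phi\) is bounded by a constant \(C_K\), by Assumption \ref{ass:smoothness} and \(\norm{\softmax-Y}\leq2\). Hence \(W_1(\rho_\clock,\rho_{\clock'})\leq C_K\abs{\clock-\clock'}\), using the characteristics \eqref{eq:characteristic}.
\item The map \(\rho\mapsto\mathcal I(\rho)\) is \(W_1\)-Lipschitz on \(\calP(K)\). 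By \eqref{eq:phi-lip-proof}, \(\nabla_\omega\Phi(\omega;\rho)\) is Lipschitz in \(\omega\) and in \(\rho\), where the measure dependence is controlled by \(\E_X\max_l\norm{\int\psi_l(\rho-\rho')}\leq B_\psi W_1(\rho,\rho')\) via Kantorovich duality. The integrand \(\norm{\nabla\Phi}^2\) is then bounded and Lipschitz in \(\omega\).
\end{enumerate}
A nonnegative, uniformly continuous, integrable function tends to zero, which proves statement 2. This is the step I expect to need the most care: one must make sure the Lipschitz constants are uniform, which is where the invariance of the compact set \(K\) is used.

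\textbf{Statement 3.} Let \(\rho_{t_n}\to\bar\rho\) in \(W_1\). The continuity of \(\mathcal I\) from statement 2 gives \(\mathcal I(\bar\rho)=\lim\mathcal I(\rho_{t_n})=0\). So \(\nabla_\omega\Phi(\cdot;\bar\rho)=0\) holds \(\bar\rho\)-a.s., and \(\bar\rho\) is stationary by \Cref{thm:stationary}.

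\textbf{Statement 4.} Nonemptiness follows from precompactness. Compactness follows because \(\Omega(\rho_0)\) is a closed subset of the compact closure of the trajectory. Connectedness uses the standard argument for continuous curves. Suppose \(\Omega(\rho_0)=A\cup B\), with \(A,B\) disjoint, nonempty and closed, at positive distance \(\delta\). Then the curve \(t\mapsto\rho_t\), which is continuous in \(W_1\) by the Lipschitz bound above, enters the \(\delta/3\)-neighbourhoods of both \(A\) and \(B\) infinitely often. By the intermediate value theorem applied to \(\operatorname{dist}(\rho_t,A)\), it must visit the set \(\{\operatorname{dist}(\cdot,A)=\delta/2\}\) at arbitrarily large times. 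By precompactness these visits produce a limit point in neither \(A\) nor \(B\), a contradiction. Containment in the stationary set is statement 3.

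\textbf{Statement 5.} By statement 4, \(\Omega(\rho_0)\) is connected and equal to its intersection with the stationary set. A connected, totally disconnected, nonempty set is a singleton \(\{\bar\rho\}\). Precompactness then forces \(\rho_t\to\bar\rho\) in \(W_1\): otherwise some subsequence stays \(\delta\)-away from \(\bar\rho\), and it would have a limit point different from \(\bar\rho\).
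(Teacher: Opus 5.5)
Your proposal is correct and follows the paper's proof essentially step by step. Monotonicity and nonnegativity of $\calE$ give $\calE_\infty$ and $\int_0^\infty\mathcal I(\rho_\clock)\dd\clock<\infty$; Barbalat's lemma, via uniform continuity of $\clock\mapsto\mathcal I(\rho_\clock)$, gives $\mathcal I\to0$; $W_1$-continuity of $\mathcal I$ gives stationarity of limit points; and the standard compactness and connectedness argument for $\omega$-limit sets handles statements 4 and 5. The differences are cosmetic: you make explicit the $W_1$-Lipschitz bounds on $\rho\mapsto\mathcal I(\rho)$ and on the trajectory that the paper only asserts, and you apply the intermediate value theorem to $\operatorname{dist}_{W_1}(\rho_t,A)$ where the paper instead uses the separating set $\Gamma$.
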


\begin{remark}[Checkable criteria for precompactness]
The precompactness assumption in \Cref{thm:omega-limit} can be checked at the level of the characteristic equation.  In the rescaled time variable \(\clock\), a characteristic satisfies
\begin{align*}
\frac{\dd}{\dd\clock}W_\clock
=
-\nabla_\omega\Phi(W_\clock;\rho_\clock),
\end{align*}
where
\begin{align*}
\nabla_\omega\Phi(\omega;\rho)
=
\E_X\left[
\frac1L\sum_{l=1}^L
\nabla_\omega\psi_l(X;\omega)^\top
\bigl(\softmax(z_l^\rho(X))-Y_l(X)\bigr)
\right].
\end{align*}
Since \(\norm{\softmax(z_l^\rho(X))-Y_l(X)}\leq2\), the size and direction of the vector field are controlled by the derivatives of the attention feature maps.  A simple sufficient condition is an invariant compact set: if \(\supp\rho_0\subset K_R:=\{\omega:\norm{\omega-\omega_c}\leq R\}\) and
\begin{align*}
\inner{\omega-\omega_c}{\nabla_\omega\Phi(\omega;\rho)}\geq0
\qquad
\text{for every }\omega\in\partial K_R
\end{align*}
whenever \(\rho\in\calP(K_R)\), then
\begin{align*}
\frac{\dd}{\dd\clock}\norm{W_\clock-\omega_c}^2
=
-2\inner{W_\clock-\omega_c}{\nabla_\omega\Phi(W_\clock;\rho_\clock)}\leq0
\end{align*}
on the boundary, and hence the support remains in \(K_R\).  This immediately gives precompactness in \(W_1\).  More generally, a dissipative confinement condition such as
\begin{align*}
\inner{\omega}{\nabla_\omega\Phi(\omega;\rho)}
\geq
 a\norm{\omega}^2-b,
\qquad a>0,
\end{align*}
for all sufficiently large \(\norm\omega\) and all relevant \(\rho\), yields a uniform second-moment estimate by differentiating \(\E\norm{W_\clock}^2\).  Together with tightness on finite-dimensional parameter space, this gives relative compactness in \(W_1\).  These conditions are directly verifiable from the displayed formula for \(\nabla_\omega\Phi\) once growth or inward-pointing bounds for \(\nabla_\omega\psi_l\) are available.  Without such an invariant-domain or confinement mechanism, boundedness of \(\nabla_\omega\psi_l\) alone gives finite-speed motion on finite time intervals but does not by itself imply long-time precompactness.
\end{remark}

\begin{remark}[Comparison with general gradient systems]
This theorem plays the role of a LaSalle-type invariance result for the Wasserstein gradient flow. It is weaker than global convergence theorems that rely on displacement convexity or a global Polyak-{\L}ojasiewicz inequality, but it is stronger than finite-time propagation-of-chaos estimates because it identifies the asymptotic stationary set of the limiting PDE. The statement follows the general philosophy of metric gradient flows in \citet{AmbrosioGigliSavare2008}, while keeping the assumptions local enough to apply to the nonconvex attention-head parameterization.
\end{remark}

\begin{proof}
Since \(\calE(\rho_t)\) is non-increasing and bounded below by \(0\), the limit \(\calE_\infty\) exists. In the rescaled time variable,
\begin{align*}
\int_0^\infty
\mathcal I(\rho_{\clock})
\dd\clock
=
\calE(\rho_0)-\calE_\infty
<\infty.
\end{align*}
On a compact parameter set, \(\rho\mapsto \mathcal I(\rho)\) is continuous with respect to \(W_1\), and the trajectory is uniformly continuous in \(W_1\). Hence \(\clock\mapsto \mathcal I(\rho_{\clock})\) is uniformly continuous. Since it is nonnegative and integrable over \([0,\infty)\), Barbalat's lemma implies 
% This lemma says that if a function \(g:[0,\infty)\to[0,\infty)\) is uniformly continuous and \(\int_0^\infty g(s)\dd s<\infty\), then \(g(s)\to0\). Indeed, if \(g(s_n)\geq\eta>0\) for some \(s_n\to\infty\), uniform continuity gives a length \(\delta>0\) such that \(g(s)\geq\eta/2\) on \([s_n-\delta,s_n+\delta]\) for infinitely many disjoint intervals, contradicting integrability. 
%Applying this lemma to \(g(\clock)=\mathcal I(\rho_\clock)\) gives
\begin{align*}
\mathcal I(\rho_{\clock})\to0
\end{align*}
as \(\clock\to\infty\). Returning to physical time gives the same conclusion because \(\clock(t)\to\infty\) as \(t\to\infty\).

Let \(\rho_{t_n}\to\bar\rho\) in \(W_1\). By continuity of \(\mathcal I\),
\begin{align*}
\mathcal I(\bar\rho)=0.
\end{align*}
Therefore
\begin{align*}
\nabla_\omega\Phi(\omega;\bar\rho)=0
\quad
\bar\rho\text{-almost surely},
\end{align*}
so \(\bar\rho\) is stationary by \Cref{thm:stationary}.

Precompactness implies that for every sequence \(t_n\to\infty\) there is a subsequence \(t_{n_j}\) and a measure \(\bar\rho\) such that \(W_1(\rho_{t_{n_j}},\bar\rho)\to0\); hence \(\Omega(\rho_0)\) is nonempty. Moreover,
\begin{align*}
\Omega(\rho_0)
=
\bigcap_{T>0}
\overline{\{\rho_t:t\geq T\}}^{W_1},
\end{align*}
where each tail closure is compact. Indeed, by precompactness the closure of \(\{\rho_t:t\geq0\}\) is compact in \(W_1\), and each set \(\overline{\{\rho_t:t\geq T\}}^{W_1}\) is a closed subset of this compact set. The family is nested: if \(T_2\geq T_1\), then
\begin{align*}
\overline{\{\rho_t:t\geq T_2\}}^{W_1}
\subset
\overline{\{\rho_t:t\geq T_1\}}^{W_1}.
\end{align*}
Therefore their intersection is closed inside the compact set \(\overline{\{\rho_t:t\geq0\}}^{W_1}\), hence compact.

We next prove connectedness. First note that
\begin{align}
\label{eq:distance-to-omega-limit}
\operatorname{dist}_{W_1}(\rho_t,\Omega(\rho_0))\to0.
\end{align}
Otherwise there would exist \(\eta>0\) and \(t_n\to\infty\) such that \(\operatorname{dist}_{W_1}(\rho_{t_n},\Omega(\rho_0))\geq\eta\). By precompactness, a subsequence of \(\rho_{t_n}\) converges to some element of \(\Omega(\rho_0)\), a contradiction. Suppose, for contradiction, that \(\Omega(\rho_0)\) is not connected. Then there exist disjoint nonempty compact sets \(A,B\) such that
\begin{align*}
\Omega(\rho_0)\subset A\cup B,
\qquad
\operatorname{dist}_{W_1}(A,B)=\delta>0.
\end{align*}
By definition of \(\Omega(\rho_0)\), the trajectory visits the \(\delta/4\)-neighborhoods of both \(A\) and \(B\) at arbitrarily large times. Since \(t\mapsto\rho_t\) is continuous in \(W_1\), between two such visits it must pass through the closed set
\begin{align*}
\Gamma=
\left\{\rho:
\operatorname{dist}_{W_1}(\rho,A)\geq\frac{\delta}{4},
\quad
\operatorname{dist}_{W_1}(\rho,B)\geq\frac{\delta}{4}
\right\}.
\end{align*}
Thus there are times \(s_n\to\infty\) with \(\rho_{s_n}\in\Gamma\). A convergent subsequence has a limit point \(\rho_\infty\in\Omega(\rho_0)\cap\Gamma\), contradicting \(\Omega(\rho_0)\subset A\cup B\) and the definition of \(\Gamma\). Hence \(\Omega(\rho_0)\) is connected.

If the stationary set intersected with \(\Omega(\rho_0)\) is totally disconnected, then the connected set \(\Omega(\rho_0)\), which is contained in that stationary set, must consist of a single point, say \(\bar\rho\). Finally, if \(\rho_t\) did not converge to \(\bar\rho\), then there would be \(\eta>0\) and \(t_n\to\infty\) such that \(W_1(\rho_{t_n},\bar\rho)\geq\eta\). A convergent subsequence would produce a second point of \(\Omega(\rho_0)\), contradicting \(\Omega(\rho_0)=\{\bar\rho\}\). Therefore \(\rho_t\to\bar\rho\) in \(W_1\).
\end{proof}

%\subsection{Rates from gradient domination and {\L}ojasiewicz inequalities}

\begin{proposition}[Precompactness of compactly supported trajectories]
\label{prop:w1-precompactness}
Assume Assumption \ref{ass:smoothness} and let \(\rho_t\) be a weak solution of \eqref{eq:pde} such that
\begin{align*}
\supp\rho_t\subset K
\qquad
\text{for every }t\geq0.
\end{align*}
Then the trajectory \(\{\rho_t:t\geq0\}\) is precompact in \(W_1\). In particular, the precompactness hypothesis in \Cref{thm:omega-limit} is automatic for the compactly supported solutions considered under Assumption \ref{ass:smoothness}.
\end{proposition}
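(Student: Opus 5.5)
The plan is to reduce the claim to the compactness of \(\calP(K)\) itself in the \(W_1\) topology. Since every \(\rho_t\) is supported in \(K\), the whole trajectory \(\{\rho_t:t\geq0\}\) lies in \(\calP(K)\). It therefore suffices to show that \(\calP(K)\), viewed as a subset of the probability measures on \(\R^{d_\omega}\) with finite first moment, is compact for \(W_1\). Precompactness then follows at once, because a subset of a compact metric space has compact closure.

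The compactness of \(\calP(K)\) will be established in two steps.

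\textbf{Narrow compactness.} Since \(K\) is compact, the family \(\calP(K)\) is trivially tight. By Prokhorov's theorem it is sequentially compact for narrow convergence. It is also narrowly closed: if \(\rho_n\to\rho\) narrowly with \(\supp\rho_n\subset K\), the Portmanteau theorem applied to the closed set \(K\) gives
\begin{align*}
\rho(K)\geq\limsup_{n\to\infty}\rho_n(K)=1.
\end{align*}
Equivalently, one can invoke the Riesz representation theorem on \(C(K)\) together with the Banach--Alaoglu theorem.

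\textbf{Upgrade to \(W_1\).} On a bounded set, narrow convergence and \(W_1\)-convergence coincide. The characterization of \(W_p\)-convergence (see \citet{AmbrosioGigliSavare2008}) states that \(W_1\)-convergence is equivalent to narrow convergence plus uniform integrability of first moments. The latter is automatic, because \(\norm{\omega}\leq\sup_K\norm{\cdot}<\infty\) on \(K\), so the moment tails vanish identically.

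A more self-contained alternative uses Kantorovich--Rubinstein duality:
\begin{align*}
W_1(\mu,\nu)=\sup_{\Lip(f)\leq1}\int f\,\dd(\mu-\nu).
\end{align*}
Normalizing \(f(\omega_0)=0\) for a fixed \(\omega_0\in K\), the class of admissible \(f\) restricted to \(K\) is uniformly bounded and equicontinuous. By Arzel\`a--Ascoli it is totally bounded in \(C(K)\), so narrow convergence on \(K\) is uniform over this class. This yields \(W_1(\rho_n,\rho)\to0\).

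\textbf{Conclusion.} Any sequence \(\rho_{t_n}\) has a subsequence converging in \(W_1\) to some \(\bar\rho\in\calP(K)\), which is exactly precompactness. The final sentence of the proposition is then immediate, since Assumption \ref{ass:smoothness} stipulates that all measures considered are supported in \(K\).

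The only step requiring care is the upgrade from narrow to \(W_1\)-convergence, specifically the uniformity over \(1\)-Lipschitz test functions. I would handle it by the Arzel\`a--Ascoli argument described above: take a finite \(\eta\)-net of the normalized Lipschitz functions in \(\norm{\cdot}_{C(K)}\), use narrow convergence on each of the finitely many net functions, and control the approximation error by \(2\eta\). No property of the PDE beyond the support condition is needed, and in particular the weak-solution structure plays no role.
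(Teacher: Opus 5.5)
Your proof is correct and follows the paper's route: Prokhorov's theorem gives narrow compactness of \(\calP(K)\), and on a compact set narrow convergence upgrades to \(W_1\)-convergence because first moments are automatically controlled. Your explicit Portmanteau check that narrow limits remain in \(\calP(K)\), and your Arzel\`a--Ascoli version of the upgrade, are small refinements of steps the paper states more briefly.
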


\begin{proof}
Since \(K\) is compact, every probability measure supported in \(K\) has a uniformly bounded first moment. Moreover, narrow convergence on \(\calP(K)\) is equivalent to convergence in \(W_1\). Indeed, if \(\mu_n\) converges narrowly to \(\mu\) and all measures are supported in \(K\), then for a fixed \(\omega_0\in K\) the function \(\omega\mapsto\norm{\omega-\omega_0}\) is bounded and continuous on \(K\), so the first moments converge automatically. The standard characterization of \(W_1\)-convergence as narrow convergence plus convergence of first moments therefore gives \(W_1(\mu_n,\mu)\to0\). Conversely, \(W_1\)-convergence implies narrow convergence by the Kantorovich--Rubinstein duality, since every bounded Lipschitz test function is continuous and has controlled Lipschitz seminorm on \(K\). By Prokhorov's theorem, compactness of \(K\) implies narrow compactness of \(\calP(K)\); and hence \(\calP(K)\) compact in \(W_1\). Therefore any sequence \(\rho_{t_n}\) has a subsequence converging in \(W_1\) to a probability measure supported in \(K\). This proves precompactness.
\end{proof}

The previous theorem gives convergence to the stationary set but does not give a rate. Rates require additional information on the geometry of the risk near the limiting stationary set.

\begin{theorem}[Kurdyka--{\L}ojasiewicz rates]
\label{thm:kl-rate}
Let \(\bar\rho\) be a stationary solution of \eqref{eq:pde}. Suppose that for some neighborhood \(\mathcal U\) of \(\bar\rho\), constants \(c_{\mathrm{KL}}>0\), \(\theta\in[1/2,1)\), and all \(\rho\in\mathcal U\),
\begin{equation}
\label{eq:kl}
\mathcal I(\rho)^{1/2}
\geq
c_{\mathrm{KL}}
\bigl(
\calE(\rho)-\calE(\bar\rho)
\bigr)^\theta.
\end{equation}
Assume that \(\rho_t\in\mathcal U\) for \(t\geq t_0\). Let
\begin{align*}
G(t)=\calE(\rho_t)-\calE(\bar\rho).
\end{align*}
Then:
\begin{enumerate}
\item If \(\theta=1/2\), then
\begin{align*}
G(t)
\leq
G(t_0)
\exp\left(
-2c_{\mathrm{KL}}^2
\int_{t_0}^t\zeta(r)\dd r
\right).
\end{align*}
\item If \(1/2<\theta<1\), then
\begin{align*}
G(t)
\leq
\left[
G(t_0)^{1-2\theta}
+
2c_{\mathrm{KL}}^2(2\theta-1)
\int_{t_0}^t\zeta(r)\dd r
\right]^{-1/(2\theta-1)}.
\end{align*}
\end{enumerate}
\end{theorem}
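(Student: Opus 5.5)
The plan is to combine the energy dissipation identity of \Cref{thm:energy-dissipation} with the Kurdyka--{\L}ojasiewicz inequality \eqref{eq:kl}, which gives a scalar differential inequality for \(G\). That inequality is then integrated explicitly in each of the two regimes of \(\theta\).

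First I check the sign of \(G\) on \([t_0,\infty)\). By \Cref{thm:energy-dissipation}, \(G\) is non-increasing and
\begin{align*}
G'(t)=-2\zeta(t)\mathcal I(\rho_t).
\end{align*}
If \(G(t_1)\le 0\) for some \(t_1\ge t_0\), then \(G(t)\le 0\) for all \(t\ge t_1\). In that case the claimed bounds hold trivially on \([t_1,\infty)\), because their right-hand sides are nonnegative. So it suffices to work on an interval where \(G>0\). There the right-hand side of \eqref{eq:kl} is well defined, and since \(\rho_t\in\mathcal U\) for \(t\ge t_0\), squaring \eqref{eq:kl} gives
\begin{align*}
G'(t)\le -2c_{\mathrm{KL}}^2\,\zeta(t)\,G(t)^{2\theta}.
\end{align*}
If \(G(t_0)\le 0\), then \(G\le 0\) throughout and there is nothing to prove.

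For \(\theta=1/2\) the inequality reads \(G'\le -2c_{\mathrm{KL}}^2\zeta G\). Multiplying by the integrating factor \(\exp\bigl(2c_{\mathrm{KL}}^2\int_{t_0}^t\zeta\bigr)\) shows that the product is non-increasing, and this gives the exponential bound. For \(1/2<\theta<1\), on the set where \(G>0\) I differentiate \(G^{1-2\theta}\):
\begin{align*}
\frac{\dd}{\dd t}G^{1-2\theta}=(1-2\theta)G^{-2\theta}G'\ge 2c_{\mathrm{KL}}^2(2\theta-1)\zeta(t).
\end{align*}
Integrating from \(t_0\) to \(t\) and raising to the power \(-1/(2\theta-1)\), which reverses the inequality, yields the stated algebraic rate.

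The main technical point is regularity: the chain rule for \(\calE(\rho_t)\) has to make sense. I expect \(t\mapsto\calE(\rho_t)\) to be absolutely continuous, and in fact \(C^1\), since \(\zeta\) is continuous and \(\mathcal I(\rho_t)\) is continuous in \(t\) on the compact set \(K\). The energy identity of \Cref{thm:energy-dissipation} is then pointwise. Both comparison arguments use only monotonicity of an absolutely continuous function with an almost-everywhere derivative bound, so this suffices. The one remaining care point is to restrict to the maximal interval \([t_0,t_1)\) on which \(G>0\), so that \(G^{2\theta}\) and \(G^{1-2\theta}\) are well defined; beyond \(t_1\) the bound holds trivially as noted above.
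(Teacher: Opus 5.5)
Your proposal is correct and follows the paper's proof: energy dissipation combined with \eqref{eq:kl} gives \(G'\le -2c_{\mathrm{KL}}^2\zeta G^{2\theta}\), and the paper also concludes by Gronwall for \(\theta=1/2\) and by integrating \(G^{1-2\theta}\) for \(\theta>1/2\). Your extra bookkeeping on the sign of \(G\) and on the regularity of \(t\mapsto\calE(\rho_t)\) fills in points the paper leaves implicit. One small correction: for \(\theta=1/2\) with \(G(t_0)<0\), the right-hand side is negative rather than trivially nonnegative, but the bound still holds because \(G(t)\le G(t_0)\le G(t_0)\exp\bigl(-2c_{\mathrm{KL}}^2\int_{t_0}^t\zeta(r)\dd r\bigr)\).
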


\begin{remark}[Checkable sources of the KL inequality]
The KL inequality in \Cref{thm:kl-rate} is an additional landscape assumption, but there are several standard situations in which it can be verified from the structure of \(\nabla_\omega\Phi\).  First, suppose that the dynamics is restricted to a finite-atomic ansatz
\begin{align*}
\rho=\sum_{q=1}^Q a_q\delta_{\omega_q},
\qquad
 a_q\geq0,
\qquad
\sum_{q=1}^Q a_q=1.
\end{align*}
If \(\omega\mapsto\psi_l(X;\omega)\) is real analytic on a neighborhood of the relevant compact set and the expectation in \(X\) preserves analyticity, for instance for a finite empirical data distribution, then \(\calE\) restricted to the variables \((a_q,\omega_q)_{q=1}^Q\) is a finite-dimensional real-analytic function.  The classical finite-dimensional Kurdyka--{\L}ojasiewicz theorem then gives a local KL inequality near every critical point of this finite-dimensional reduction.

A second, more quantitative, sufficient condition is a local Polyak--{\L}ojasiewicz estimate.  If, in a neighborhood of \(\bar\rho\), one can show
\begin{align*}
\mathcal I(\rho)
=
\int\norm{\nabla_\omega\Phi(\omega;\rho)}^2\rho(\dd\omega)
\geq
\lambda\bigl(\calE(\rho)-\calE(\bar\rho)\bigr)
\end{align*}
for some \(\lambda>0\), then \eqref{eq:kl} holds with \(\theta=1/2\) and \(c_{\mathrm{KL}}=\sqrt\lambda\).  Such an estimate can often be checked by combining the displayed expression for \(\nabla_\omega\Phi\) with a nondegeneracy condition on the feature Jacobians \(\nabla_\omega\psi_l\) and a positive lower bound for the softmax Hessian \(\diag(p_l)-p_lp_l^\top\) on the identifiable logit subspace.  Thus the KL assumption may be viewed as a local observability condition: the residual direction in logit space must be detected, with a uniform lower bound, by the gradients of the attention-head features.
\end{remark}

\begin{remark}
If \(\zeta(t)\geq\zeta_->0\), then the rates in \Cref{thm:kl-rate} become rates in physical time. For \(\theta=1/2\),
\begin{align*}
G(t)\leq G(t_0)\exp\left(-2c_{\mathrm{KL}}^2\zeta_-(t-t_0)\right).
\end{align*}
For \(1/2<\theta<1\),
\begin{align*}
G(t)
\leq
\left[
G(t_0)^{1-2\theta}
+
2c_{\mathrm{KL}}^2(2\theta-1)\zeta_-(t-t_0)
\right]^{-1/(2\theta-1)},
\end{align*}
so the energy gap is algebraic of order \(t^{-1/(2\theta-1)}\).
\end{remark}

\begin{remark}[Comparison with finite-dimensional {\L}ojasiewicz theory]
The theorem is the measure-space counterpart of the standard Kurdyka-{\L}ojasiewicz mechanism for finite-dimensional gradient flows. The paper does not claim that the KL inequality automatically holds for the full attention mean-field functional; rather, it isolates the exact inequality that converts energy dissipation into rates. This is analogous to how mean-field analyses of neural networks often separate the derivation of the limiting dynamics from additional landscape assumptions needed for convergence rates.
\end{remark}

\begin{proof}
By \Cref{thm:energy-dissipation} and \eqref{eq:kl},
\begin{align*}
G'(t)
=
-2\zeta(t)\mathcal I(\rho_t)
\leq
-2c_{\mathrm{KL}}^2\zeta(t)G(t)^{2\theta}.
\end{align*}
The case \(\theta=1/2\) follows directly from Gronwall's inequality. For \(1/2<\theta<1\), integrate the differential inequality for \(G(t)^{1-2\theta}\).
\end{proof}

\section{Stability of stationary solutions}

We next analyze stability of stationary solutions. The results in this section also give convergence rates in Wasserstein distance.

\subsection{A general strong-monotonicity criterion}

\begin{theorem}[Local Wasserstein exponential stability]
\label{thm:wasserstein-stability}
Let \(\bar\rho\) be a stationary solution of \eqref{eq:pde}. Assume that there exist \(r>0\) and \(\kappa>0\) such that for every \(\rho\) satisfying \(W_2(\rho,\bar\rho)<r\) and every coupling \(\pi\in\Pi(\rho,\bar\rho)\),
\begin{equation}
\label{eq:strong-monotonicity}
\int
\inner{\omega-\omega'}{
\nabla_\omega\Phi(\omega;\rho)
-
\nabla_\omega\Phi(\omega';\bar\rho)
}
\pi(\dd \omega,\dd \omega')
\geq
\kappa
\int\norm{\omega-\omega'}^2\pi(\dd \omega,\dd \omega').
\end{equation}
If \(\rho_t\) is a solution with \(W_2(\rho_0,\bar\rho)<r\) and the trajectory remains in this \(W_2\)-ball, then
\begin{equation}
\label{eq:w2-exp-stability}
W_2(\rho_t,\bar\rho)^2
\leq
W_2(\rho_0,\bar\rho)^2
\exp\left(
-4\kappa\int_0^t\zeta(s)\dd s
\right).
\end{equation}
Consequently,
\begin{align*}
W_2(\rho_t,\bar\rho)
\leq
W_2(\rho_0,\bar\rho)
\exp\left(
-2\kappa\int_0^t\zeta(s)\dd s
\right).
\end{align*}
If \(\zeta(t)\geq\zeta_->0\), the convergence is exponential in physical time.
\end{theorem}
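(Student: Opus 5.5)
The plan is a synchronous-coupling argument. We differentiate the squared distance between two coupled characteristics and then apply the strong-monotonicity hypothesis \eqref{eq:strong-monotonicity}.

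Fix \(t_0\ge0\) and take an optimal coupling \(\pi_{t_0}\in\Pi(\rho_{t_0},\bar\rho)\) for \(W_2\). Pick a pair \((W_{t_0},W'_{t_0})\sim\pi_{t_0}\), and let it evolve by the characteristic equations \eqref{eq:characteristic}:
\[
\dot W_t=-2\zeta(t)\nabla_\omega\Phi(W_t;\rho_t),
\qquad
\dot W'_t=-2\zeta(t)\nabla_\omega\Phi(W'_t;\bar\rho).
\]
Because \(\bar\rho\) is stationary, \Cref{thm:stationary} gives \(\nabla_\omega\Phi(\cdot;\bar\rho)=0\) \(\bar\rho\)-a.s. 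Hence \(W'_t\equiv W'_{t_0}\) almost surely, and its law stays \(\bar\rho\). By well-posedness of the characteristic flow, \(W_t\) has law \(\rho_t\); this uses the Lipschitz bound \eqref{eq:phi-lip-proof} on the compact set \(K\), where the characteristic flow is Lipschitz. Therefore \(\pi_t=\calL(W_t,W'_t)\in\Pi(\rho_t,\bar\rho)\) is an admissible coupling for every \(t\ge t_0\). This is a coupling, not necessarily optimal, and that is exactly why the hypothesis must hold for every coupling.

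Set \(D(t)=\E\norm{W_t-W'_t}^2\). Differentiating and subtracting the zero term \(\nabla_\omega\Phi(W'_t;\bar\rho)\) gives
\[
D'(t)=-4\zeta(t)\,\E\inner{W_t-W'_t}{\nabla_\omega\Phi(W_t;\rho_t)-\nabla_\omega\Phi(W'_t;\bar\rho)}.
\]
Since \(W_2(\rho_t,\bar\rho)<r\) by assumption, \eqref{eq:strong-monotonicity} applied to \(\pi_t\) yields \(D'(t)\le-4\kappa\zeta(t)D(t)\). Gronwall's inequality then gives \(D(t)\le D(t_0)\exp(-4\kappa\int_{t_0}^t\zeta)\). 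Because \(W_2(\rho_t,\bar\rho)^2\le D(t)\) and \(D(t_0)=W_2(\rho_{t_0},\bar\rho)^2\), taking \(t_0=0\) immediately gives \eqref{eq:w2-exp-stability}. A single coupling started at time \(0\) suffices; there is no need to restart it.

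Taking square roots gives the \(W_2\) bound. When \(\zeta\ge\zeta_-\), we have \(\int_0^t\zeta\ge\zeta_- t\), which gives exponential convergence in physical time.

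The main obstacle is rigor in the first step. One must check that the coupled characteristic system is well posed and that \(D\) is absolutely continuous, so that differentiating under the expectation is legitimate. This follows from the bounded, Lipschitz vector field on \(K\) given by Assumption \ref{ass:smoothness} and \eqref{eq:phi-lip-proof}, together with dominated convergence. One must also note that the weak solution \(\rho_t\) coincides with the pushforward of \(\rho_0\) by the flow. I would justify this by uniqueness for the continuity equation with a Lipschitz velocity field, or simply take the characteristic representation \eqref{eq:characteristic} as the definition of the solution, as the paper does.
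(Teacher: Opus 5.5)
Your proposal is correct and follows essentially the same synchronous-coupling argument as the paper. Both start from an optimal coupling at time $0$ and freeze the stationary partner, using $\nabla_\omega\Phi(\cdot;\bar\rho)=0$ $\bar\rho$-a.s. Both then differentiate $D(t)$, apply the monotonicity hypothesis to the (non-optimal) coupling $\pi_t$, and conclude with Gronwall. Your added remarks on well-posedness of the characteristic flow and on identifying $\rho_t$ with the pushforward of $\rho_0$ address points that the paper's proof takes for granted.
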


\begin{remark}[A checkable sufficient condition for strong monotonicity]
The assumption in \Cref{thm:wasserstein-stability} can be verified by separating the local curvature in \(\omega\) from the measure-dependence of the vector field.  Suppose that there is a neighborhood \(U\) of \(\supp\bar\rho\) and constants \(m>0\), \(\gamma\geq0\) such that, for all \(\omega,\omega'\in U\) and all \(\rho\) supported in \(U\),
\begin{align*}
\inner{\omega-\omega'}{\nabla_\omega\Phi(\omega;\bar\rho)-\nabla_\omega\Phi(\omega';\bar\rho)}
&\geq
m\norm{\omega-\omega'}^2,\\
\norm{\nabla_\omega\Phi(\omega;\rho)-\nabla_\omega\Phi(\omega;\bar\rho)}
&\leq
\gamma W_2(\rho,\bar\rho).
\end{align*}
The first inequality follows, for example, if \(\nabla_\omega^2\Phi(\omega;\bar\rho)\succeq mI\) on \(U\).  The second one follows from the formula for \(\nabla_\omega\Phi\), the Lipschitz continuity of softmax, and the Lipschitz bound on \(\omega\mapsto\psi_l(X;\omega)\): indeed,
\begin{align*}
\norm{\nabla_\omega\Phi(\omega;\rho)-\nabla_\omega\Phi(\omega;\bar\rho)}
\leq
C_K\E_X\max_l
\norm{\int\psi_l(X;\omega)(\rho-\bar\rho)(\dd\omega)}
\leq
C_K W_2(\rho,\bar\rho),
\end{align*}
so one may take \(\gamma=C_K\) when the displayed bound is valid on \(U\).  For any coupling \(\pi\in\Pi(\rho,\bar\rho)\), Cauchy's inequality and \(W_2(\rho,\bar\rho)^2\leq\int\norm{\omega-\omega'}^2\pi(\dd \omega,\dd \omega')\) give
\begin{align*}
&\int\inner{\omega-\omega'}{\nabla_\omega\Phi(\omega;\rho)-\nabla_\omega\Phi(\omega';\bar\rho)}\pi(\dd \omega,\dd \omega')\\
&\qquad\geq
m\int\norm{\omega-\omega'}^2\pi(\dd \omega,\dd \omega')
-
\gamma W_2(\rho,\bar\rho)
\left(\int\norm{\omega-\omega'}^2\pi(\dd \omega,\dd \omega')\right)^{1/2}\\
&\qquad\geq
(m-\gamma)\int\norm{\omega-\omega'}^2\pi(\dd \omega,\dd \omega').
\end{align*}
Hence the strong-monotonicity condition holds with any \(\kappa<m-\gamma\), provided \(m>\gamma\) and the trajectory remains in the neighborhood where the estimates are valid.  This criterion is particularly transparent for Dirac or finitely atomic stationary measures, where the curvature condition \eqref{eq:strong-monotonicity} reduces to positive definiteness of the corresponding Hessian matrices \eqref{eq:H0-positive}; see the following \Cref{thm:dirac-stability}.
\end{remark}

\begin{remark}[Relation to contractivity of Wasserstein gradient flows]
The strong-monotonicity assumption is a local version of the monotonicity or convexity conditions that yield contraction estimates for Wasserstein gradient flows, as in \citet{AmbrosioGigliSavare2008}. Unlike global displacement convexity, which is generally unavailable for neural-network parameterizations, this theorem only asks for monotonicity near a chosen stationary measure. It therefore gives a stability criterion tailored to nonconvex mean-field models.
\end{remark}

\begin{remark}[Lions-lift interpretation of strong monotonicity]
The monotonicity condition \eqref{eq:strong-monotonicity} can also be interpreted through the Lions lift of the mean-field risk.  Let
\begin{align*}
\widetilde{\calE}(W)=\calE(\calL(W)),
\qquad
W\in L^2(\Omega,\mathcal F,\mathbb P;\R^{d_\omega}),
\end{align*}
where \(\calL(W)\) denotes the law of \(W\).  The Fr\'echet derivative of this lifted functional, $D\widetilde{\calE}(W)$, is the Lions derivative of \(\calE(\rho)\) evaluated at $\rho=\calL(W)$, and in the present model it is given by
\begin{align*}
D\widetilde{\calE}(W)=\nabla_\omega\Phi(W;\calL(W))
\quad\text{in }L^2(\Omega,\mathcal F,\mathbb P;\R^{d_\omega}).
\end{align*}
If \((W,\bar W)\) is a coupling of \(\rho\) and \(\bar\rho\), then \eqref{eq:strong-monotonicity} is exactly the lifted strong monotonicity estimate
\begin{align}
\label{eq:lions-lift-monotonicity}
D\widetilde{\calE}(W)[W_\delta]-D\widetilde{\calE}(\bar W)[W_\delta]=\E\inner{W-\bar W}{D\widetilde{\calE}(W)-D\widetilde{\calE}(\bar W)}
\geq
\kappa\E\norm{W-\bar W}^2=\kappa\E\norm{W_\delta}^2,
\end{align}
where \(W_\delta=W-\bar W\). When \(\bar\rho\) is stationary, one may choose \(\bar W\) with law \(\bar\rho\), and then \(D\widetilde{\calE}(\bar W)=\nabla_\omega\Phi(\bar W;\bar\rho)=0\) almost surely.  Thus \eqref{eq:strong-monotonicity} says that the lifted gradient of the mean-field risk is locally strongly monotone around the stationary lift \(\bar W\).  Equivalently, along line segments in the lifted Hilbert space on which \eqref{eq:lions-lift-monotonicity} holds, the lifted risk behaves like a locally strongly convex functional.

If, in addition, \(\widetilde{\calE}\) is twice Fr\'echet differentiable at \(\bar W\) and \(\calE(\rho)\) is twice regularly differentiable with respect to $\rho\in\mathcal{P}_2(K)$ in the sense of Definition~3.2 of \citep{BWYY2026}, then the infinitesimal version of \eqref{eq:lions-lift-monotonicity} is
\begin{align}
\label{eq:lions-lift-hessian-coercivity}
D^2\widetilde{\calE}(\bar W)[W_\delta,W_\delta]
\geq
\kappa\E\norm{W_\delta}^2,
\qquad
W_\delta\in L^2(\Omega,\mathcal F,\mathbb P;\R^{d_\omega}).
\end{align}
Letting \((\widetilde {\bar W},\widetilde W_\delta)\) be an independent copy of \((\bar W,W_\delta)\), the second-order formula for the Lions lift, corresponding to equation~(3.3) in \citep{BWYY2026}, gives
\begin{align}
\label{eq:lions-lift-second-variation}
D^2\widetilde{\calE}(\bar W)[W_\delta,W_\delta]
=&\ \E\left[
W_\delta^\top
\nabla_\omega^2\frac{\delta\calE}{\delta\rho}(\bar\rho)(\bar W)
W_\delta
\right]
\nonumber\\
&+\E\widetilde\E\left[
W_\delta^\top
\nabla_\omega\nabla_{\widetilde\omega}
\frac{\delta^2\calE}{\delta\rho^2}(\bar\rho)(\bar W,\widetilde {\bar W})
\widetilde W_\delta
\right].
\end{align}
The first term measures the curvature of the first variation in the particle variable, while the second term measures the curvature coming from the dependence of the first variation on the law.  Therefore the local Wasserstein strong monotonicity condition used in \Cref{thm:wasserstein-stability} is the transport analogue of a positive-definiteness condition for the Hessian of the Lions lift.  Monotonicity and convexity assumptions of this type are also central in mean-field type control problems and mean field games; see \citet{BWYY2026} for a systematic treatment.
\end{remark}

\begin{proof}
Let \((W_0,\bar W_0)\) be an optimal coupling of \(\rho_0\) and \(\bar\rho\). Let \(W_t\) solve
\begin{align*}
\dot W_t
=
-2\zeta(t)\nabla_\omega\Phi(W_t;\rho_t),
\qquad
\calL(W_t)=\rho_t.
\end{align*}
Since \(\bar\rho\) is stationary, \(\nabla_\omega\Phi(\bar W_0;\bar\rho)=0\) for \(\bar\rho\)-almost every \(\bar W_0\). Hence \(\bar W_t=\bar W_0\) is a stationary characteristic associated with \(\bar\rho\). Then \(\pi_t=\calL(W_t,\bar W_0)\) is a coupling of \(\rho_t\) and \(\bar\rho\). Define
\begin{align*}
D(t)=\E\norm{W_t-\bar W_0}^2.
\end{align*}
Differentiating,
\begin{align*}
D'(t)
=
-4\zeta(t)
\E
\inner{W_t-\bar W_0}{
\nabla_\omega\Phi(W_t;\rho_t)
-
\nabla_\omega\Phi(\bar W_0;\bar\rho)
}.
\end{align*}
By \eqref{eq:strong-monotonicity},
\begin{align*}
D'(t)
\leq
-4\kappa\zeta(t)D(t).
\end{align*}
Therefore
\begin{align*}
D(t)
\leq
D(0)
\exp\left(
-4\kappa\int_0^t\zeta(s)\dd s
\right).
\end{align*}
Since \(W_2(\rho_t,\bar\rho)^2\leq D(t)\) and \(D(0)=W_2(\rho_0,\bar\rho)^2\), the result follows.
\end{proof}

\subsection{Local stability of Dirac stationary measures}

We now give a concrete stability result for single-point stationary measures. Let
\begin{align*}
\rho_\ast=\delta_{\omega_\ast}.
\end{align*}
Define
\begin{equation}
\label{eq:H0}
H_0
=
\nabla_\omega^2\Phi(\omega_\ast;\rho_\ast).
\end{equation}

\begin{theorem}[Dirac stationarity and local exponential stability]
\label{thm:dirac-stability}
Assume Assumption \ref{ass:smoothness}. Let \(\rho_\ast=\delta_{\omega_\ast}\). Then \(\rho_\ast\) is a stationary solution of \eqref{eq:pde} if and only if
\begin{equation}
\label{eq:dirac-stationary}
\nabla_\omega\Phi(\omega_\ast;\rho_\ast)=0.
\end{equation}
Assume in addition that
\begin{equation}
\label{eq:H0-positive}
H_0\succeq \lambda_0 I
\qquad
\text{for some }\lambda_0>0.
\end{equation}
Then there exist \(r_0>0\) and \(c_\ast>0\) such that if
\begin{align*}
\supp(\rho_0)\subset B(\omega_\ast;r_0),
\end{align*}
then the solution remains in a small neighborhood of \(\omega_\ast\) for all \(t\geq0\) and
\begin{equation}
\label{eq:dirac-exp-rate}
W_2(\rho_t,\rho_\ast)^2
=
\int\norm{\omega-\omega_\ast}^2\rho_t(\dd\omega)
\leq
\exp\left(
-4c_\ast\int_0^t\zeta(s)\dd s
\right)
\int\norm{\omega-\omega_\ast}^2\rho_0(\dd\omega).
\end{equation}
% Here the identity
% \(W_2(\rho_t,\rho_\ast)^2=
% \int\norm{\omega-\omega_\ast}^2\rho_t(\dd\omega)\)
% follows from the fact that \(\rho_\ast\) is a Dirac mass: the only coupling of \(\rho_t\) with \(\delta_{\omega_\ast}\) is
% \(\pi(\dd\omega,\dd\eta)=\rho_t(\dd\omega)\delta_{\omega_\ast}(\dd\eta)\), and therefore the infimum in the definition of \(W_2\) is attained by this coupling and equals the displayed second moment.
In particular, if \(\zeta(t)\geq\zeta_->0\), then \(\rho_t\to\delta_{\omega_\ast}\) exponentially fast in \(W_2\).
\end{theorem}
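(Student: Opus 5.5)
The stationarity equivalence is a direct specialization of \Cref{thm:stationary}. For \(\bar\rho=\delta_{\omega_\ast}\), the condition ``\(\nabla_\omega\Phi(\omega;\bar\rho)=0\) for \(\bar\rho\)-almost every \(\omega\)'' reduces to the single equation \(\nabla_\omega\Phi(\omega_\ast;\rho_\ast)=0\). The continuity needed for the support form is supplied by Assumption \ref{ass:smoothness}.

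For the stability statement, I would follow the characteristic argument in the proof of \Cref{thm:wasserstein-stability}, taking \(\bar W\equiv\omega_\ast\). A Dirac mass admits only the product coupling, so
\[
W_2(\rho_t,\rho_\ast)^2=D(t):=\E\norm{W_t-\omega_\ast}^2,
\]
where \(W_t\) is the nonlinear characteristic \eqref{eq:characteristic} with \(\calL(W_t)=\rho_t\). Differentiating gives
\[
D'(t)=-4\zeta(t)\,\E\inner{W_t-\omega_\ast}{\nabla_\omega\Phi(W_t;\rho_t)}.
\]
Since \(\nabla_\omega\Phi(\omega_\ast;\rho_\ast)=0\), I split the gradient as
\[
\nabla_\omega\Phi(W_t;\rho_t)=\bigl[\nabla_\omega\Phi(W_t;\rho_\ast)-\nabla_\omega\Phi(\omega_\ast;\rho_\ast)\bigr]+\bigl[\nabla_\omega\Phi(W_t;\rho_t)-\nabla_\omega\Phi(W_t;\rho_\ast)\bigr].
\]
For the first bracket, a Taylor expansion about \(\omega_\ast\) with the bound on the third derivative of \(\Phi\) (inherited from the \(C^3\) bound \(B_\psi\) on \(\psi_l\)) gives
\[
\inner{\omega-\omega_\ast}{\cdot}\geq(\lambda_0-C\norm{\omega-\omega_\ast})\norm{\omega-\omega_\ast}^2 .
\]
Hence on \(B(\omega_\ast;r)\) with \(r\leq\lambda_0/(2C)\) this term is at least \((\lambda_0/2)\norm{\omega-\omega_\ast}^2\).

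The second bracket is the main obstacle. It is a measure-interaction term, and the crude bound from \eqref{eq:phi-lip-proof} gives only \(C_K W_1(\rho_t,\rho_\ast)\), which in general need not be dominated by \(\lambda_0\). I would exploit the Dirac structure to get a quadratic rather than linear gain. The logit difference is
\[
z_l^{\rho_t}-z_l^{\rho_\ast}=\int\bigl(\psi_l(X;\omega)-\psi_l(X;\omega_\ast)\bigr)\rho_t(\dd\omega)=\nabla_\omega\psi_l(X;\omega_\ast)\,m_t+O(D(t)),
\qquad m_t:=\E[W_t-\omega_\ast].
\]
The interaction contribution to \(\E\inner{W_t-\omega_\ast}{\cdot}\) is then \(m_t^\top J m_t+O(r\,D(t))\), where
\[
J=\E_X\Bigl[\tfrac1L\sum_l\nabla\psi_l^\top\bigl(\diag(p_l)-p_lp_l^\top\bigr)\nabla\psi_l\Bigr]\succeq0
\]
by \Cref{lem:softmax-lip}. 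This term is nonnegative, so it only helps, which is the key cancellation. I must also account for the \(\rho_t\)-dependence of \(\nabla_\omega\psi_l(X;W_t)\) versus \(\nabla_\omega\psi_l(X;\omega_\ast)\); this costs \(O(\norm{W_t-\omega_\ast}\,\norm{m_t})\), absorbed into \(O(r)D(t)\) using \(\norm{m_t}^2\leq D(t)\). Combining both brackets yields
\[
\E\inner{W_t-\omega_\ast}{\nabla_\omega\Phi(W_t;\rho_t)}\geq(\lambda_0/2-Cr)D(t)\geq c_\ast D(t)
\]
for \(r\) small, with \(c_\ast=\lambda_0/4\).

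To close the argument, I would run a continuity (bootstrap) argument on the support. Let \(\tau\) be the first time \(\supp\rho_t\) leaves \(B(\omega_\ast;r)\). On \([0,\tau)\), each characteristic satisfies the pointwise inequality
\[
\tfrac{d}{dt}\norm{W_t-\omega_\ast}^2\leq-4\zeta(t)\Bigl[(\lambda_0/2)\norm{W_t-\omega_\ast}^2-C\norm{W_t-\omega_\ast}\,W_1(\rho_t,\rho_\ast)\Bigr].
\]
Because \(W_1(\rho_t,\rho_\ast)\leq\sup_{\supp\rho_t}\norm{\omega-\omega_\ast}\), tracking \(R(t)=\sup_{\supp\rho_t}\norm{\omega-\omega_\ast}\) shows \(R\) is nonincreasing as long as \(C\leq\lambda_0/2\). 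If \(C>\lambda_0/2\), I would instead use \(R(t)\leq r_0e^{C\clock(t)}\) together with the integral decay of \(D\) to keep \(R<r\) for small \(r_0\); this is the step I would need to check carefully. In either case \(\tau=\infty\), and Gronwall applied to \(D'\leq-4c_\ast\zeta D\) gives \eqref{eq:dirac-exp-rate}.
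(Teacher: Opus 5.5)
Your stationarity argument and your averaged coercivity estimate are sound. The gap is in the bootstrap that keeps $\supp\rho_t$ inside $B(\omega_\ast;r)$, and that bootstrap is what lets you use coercivity for all $t$.

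\textbf{The gap.} Tracking $R(t)=\sup_{\supp\rho_t}\norm{\omega-\omega_\ast}$ with the bound $W_1(\rho_t,\rho_\ast)\le R(t)$ closes only if $C\le\lambda_0/2$. Here $C$ is the $W_1$-Lipschitz constant of $\rho\mapsto\nabla_\omega\Phi(\omega;\rho)$. It is a model constant, controlled by $B_\psi^2$, has no relation to $\lambda_0$, and does not shrink with $r$. So this route does not close in general. Your fallback $R(t)\le r_0e^{C\clock(t)}$ gives no uniform-in-time control, because $\clock(t)\to\infty$.

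\textbf{The missing idea.} Bound the interaction term by the averaged quantity you already control, not by the current support radius. On $[0,\tau)$ your coercivity gives $D'\le-4c_\ast\zeta D\le0$, hence $W_1(\rho_t,\rho_\ast)\le\sqrt{D(t)}\le\sqrt{D(0)}\le r_0$. Inserting this into your pointwise inequality gives
\[
\frac{\dd}{\dd t}\norm{W_t-\omega_\ast}^2\le-4\zeta(t)\norm{W_t-\omega_\ast}\Bigl(\frac{\lambda_0}{2}\norm{W_t-\omega_\ast}-Cr_0\Bigr).
\]
So no characteristic moves outward once its distance to $\omega_\ast$ exceeds $2Cr_0/\lambda_0$. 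Hence $\sup_{t<\tau}R(t)\le\max(1,2C/\lambda_0)\,r_0\le r/2$ for $r_0$ small, and continuity forces $\tau=\infty$.

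This scale separation $r_0\ll r$ is exactly the paper's device. The paper shows $\inner{\omega-\omega_\ast}{\nabla_\omega\Phi(\omega;\rho_t)}\ge\lambda_0r^2/8-Cr\,M_1(\rho_t)>0$ on the annulus $r/2\le\norm{\omega-\omega_\ast}<r$. It uses $M_1(\rho_t)\le M_2(\rho_t)^{1/2}\le r_0$, $r_0<r/2$ and $Cr_0\le\lambda_0r/16$.

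If your ``integral decay of $D$'' was aiming at this, the clean version drops $R$ entirely. From $\frac{\dd}{\dd t}\norm{W_t-\omega_\ast}\le2C\zeta(t)\sqrt{D(t)}$ and $\sqrt{D(t)}\le r_0e^{-2c_\ast\int_0^t\zeta(s)\dd s}$, integrating gives $\norm{W_t-\omega_\ast}\le r_0(1+C/c_\ast)$.

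\textbf{The coercivity step.} Yours is a legitimate variant of the paper's. The paper subtracts $\nabla_\omega\Phi(\omega_\ast;\rho)$, whereas you subtract $\nabla_\omega\Phi(\omega;\rho_\ast)$. In the paper's split the interaction term is exactly $\inner{m}{\nabla_\omega\Phi(\omega_\ast;\rho)}$, and only $J_{l,\ast}$ appears. The paper then uses monotonicity of softmax, $\inner{z-a}{\softmax(z)-\softmax(a)}\ge0$, instead of linearizing to $m^\top Jm$. This avoids both your Jacobian-replacement error and the second-order softmax remainder.

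One bookkeeping correction: replacing $\nabla_\omega\psi_l(X;W_t)$ by $J_{l,\ast}$ costs $O(\norm{W_t-\omega_\ast}^2\norm{m_t})$ after pairing with $W_t-\omega_\ast$. That is what makes it $O(r)D(t)$. A cost of $O(\norm{W_t-\omega_\ast}\norm{m_t})$, as you wrote, would only be $O(D)$ and could not be absorbed.
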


\begin{remark}[Comparison with particle-level stability]
Dirac stationary measures correspond to synchronized particle configurations in which all heads sit at the same parameter. The theorem is analogous to a local Hessian stability test for an equilibrium of a finite-dimensional gradient system, but the proof must also control the measure argument in \(\Phi(\omega;\rho)\). This additional measure dependence is absent in ordinary single-particle gradient descent and is the main reason for the moment and support bootstrap estimates.
\end{remark}

\begin{proof}
The stationarity assertion follows directly from \Cref{thm:stationary}.

We prove the stability statement.  For each parameter value \(\omega\), write
\begin{align*}
\omega_\delta=\omega-\omega_\ast,
\qquad
M_2(\rho)=\int\norm{\omega_\delta}^2\rho(\dd\omega).
\end{align*}
We first establish a local coercivity estimate. Since \(H_0\succeq\lambda_0I\) and \(\nabla_\omega^2\Phi\) is continuous in both \(\omega\) and \(\rho\), there exists \(r>0\) such that, whenever \(\supp(\rho)\subset B(\omega_\ast;r)\),
\begin{equation}
\label{eq:hessian-local}
\nabla_\omega^2\Phi(\omega;\rho)
\succeq
\frac{3\lambda_0}{4}I
\qquad
\text{for all }\omega\in B(\omega_\ast;r).
\end{equation}
The dependence of \(\nabla_\omega^2\Phi\) on \(\rho\) is controlled by the Lipschitz continuity of softmax and the \(C^2\)-bounds on \(\psi_l\).

For such \(\rho\), decompose
\begin{align*}
\int
\inner{\omega_\delta}{\nabla_\omega\Phi(\omega;\rho)}
\rho(\dd\omega)
=
A(\rho)+B(\rho),
\end{align*}
where
\begin{align*}
A(\rho)
=
\int
\inner{\omega_\delta}{
\nabla_\omega\Phi(\omega;\rho)
-
\nabla_\omega\Phi(\omega_\ast;\rho)
}
\rho(\dd\omega)
\end{align*}
and
\begin{align*}
B(\rho)
=
\int
\inner{\omega_\delta}{
\nabla_\omega\Phi(\omega_\ast;\rho)
}
\rho(\dd\omega).
\end{align*}
By \eqref{eq:hessian-local},
\begin{align}\label{Ineq:A}
A(\rho)
\geq
\frac{3\lambda_0}{4}M_2(\rho).
\end{align}

It remains to lower bound \(B(\rho)\). Let
\begin{align*}
m=\int \omega_\delta\rho(\dd\omega).
\end{align*}
For each \(X,l\), set
\begin{align*}
a_{l,X}(\omega)=\psi_l(X;\omega),
\qquad
a_{l,\ast}=a_{l,X}(\omega_\ast),
\qquad
J_{l,\ast}=\nabla_\omega a_{l,X}(\omega_\ast).
\end{align*}
Since \(\nabla_\omega\Phi(\omega_\ast;\rho_\ast)=0\),
\begin{align*}
\nabla_\omega\Phi(\omega_\ast;\rho)
=
\E_X
\left[
\frac{1}{L}
\sum_{l=1}^L
J_{l,\ast}^\top
\left(
\softmax(z_l^\rho(X))-\softmax(a_{l,\ast})
\right)
\right].
\end{align*}
Thus
\begin{align*}
B(\rho)
=
\E_X
\left[
\frac{1}{L}
\sum_{l=1}^L
\inner{
J_{l,\ast}m
}{
\softmax(z_l^\rho(X))-\softmax(a_{l,\ast})
}
\right].
\end{align*}
Also, by Taylor's formula with integral remainder, for \(\omega_\delta=\omega-\omega_\ast\),
\begin{align*}
a_{l,X}(\omega)
=
a_{l,\ast}+J_{l,\ast}\omega_\delta+
\int_0^1(1-s)\nabla_\omega^2a_{l,X}(\omega_\ast+s\omega_\delta)[\omega_\delta,\omega_\delta]\dd s.
\end{align*}
After integrating against \(\rho\), this gives
\begin{align*}
z_l^\rho(X)-a_{l,\ast}
=
J_{l,\ast}m
+
R_l,
\qquad
\norm{R_l}
\leq
\frac12\sup_{\omega\in B(\omega_\ast;r)}\norm{\nabla_\omega^2a_{l,X}(\omega)}
\int\norm{\omega_\delta}^2\rho(\dd\omega)
\leq
C M_2(\rho),
\end{align*}
where
\begin{align*}
R_l=
\int
\int_0^1(1-s)\nabla_\omega^2a_{l,X}(\omega_\ast+s\omega_\delta)[
\omega_\delta,
\omega_\delta]
\dd s\,\rho(\dd\omega).
\end{align*}
Therefore
\begin{align*}
\inner{
J_{l,\ast}m
}{
\softmax(z_l^\rho)-\softmax(a_{l,\ast})
}
=&
\inner{
z_l^\rho-a_{l,\ast}
}{
\softmax(z_l^\rho)-\softmax(a_{l,\ast})
}\\
&-
\inner{
R_l
}{
\softmax(z_l^\rho)-\softmax(a_{l,\ast})
}.
\end{align*}
The first term is nonnegative because softmax is the gradient of the convex log-sum-exp function and is therefore monotone. The second term is bounded below by
\begin{align*}
-C\norm{R_l}\norm{z_l^\rho-a_{l,\ast}}
\geq
-CrM_2(\rho),
\end{align*}
since \(\supp(\rho)\subset B(\omega_\ast;r)\). Hence
\begin{align}\label{Ineq:B}
B(\rho)
\geq
-CrM_2(\rho).
\end{align}
Choosing \(r>0\) smaller if necessary, \eqref{Ineq:A} and \eqref{Ineq:B} gives
\begin{equation}
\label{eq:local-coercivity}
\int
\inner{\omega-\omega_\ast}{
\nabla_\omega\Phi(\omega;\rho)
}
\rho(\dd\omega)
\geq
c_\ast M_2(\rho)
\end{equation}
for some \(c_\ast>0\).

We now use a bootstrap argument to show that the support remains in the neighborhood. From \eqref{eq:characteristic},
\begin{align*}
\frac{\dd}{\dd t}\frac{1}{2}\norm{W_t-\omega_\ast}^2
=
-2\zeta(t)
\inner{
W_t-\omega_\ast
}{
\nabla_\omega\Phi(W_t;\rho_t)
}.
\end{align*}
We now make the bootstrap argument explicit. For \(\rho\) supported in \(B(\omega_\ast;r)\) define
\begin{align*}
F(\omega,\rho)
=
\inner{\omega-\omega_\ast}{\nabla_\omega\Phi(\omega;\rho)}.
\end{align*}
Since \(\nabla_\omega\Phi(\omega_\ast;\rho_\ast)=0\), write
\begin{align*}
\nabla_\omega\Phi(\omega;\rho)
=&\
\bigl[\nabla_\omega\Phi(\omega;\rho)-\nabla_\omega\Phi(\omega_\ast;\rho)\bigr]
+
\bigl[\nabla_\omega\Phi(\omega_\ast;\rho)-\nabla_\omega\Phi(\omega_\ast;\rho_\ast)\bigr].
\end{align*}
For the first bracket, the local Hessian bound gives, after reducing \(r\) if necessary,
\begin{align*}
\inner{\omega-\omega_\ast}{
\nabla_\omega\Phi(\omega;\rho)-\nabla_\omega\Phi(\omega_\ast;\rho)
}
\geq
\frac{\lambda_0}{2}\norm{\omega-\omega_\ast}^2.
\end{align*}
For the second bracket, the Lipschitz dependence of \(\nabla_\omega\Phi(\omega_\ast;\rho)\) on the measure argument gives
\begin{align*}
\norm{
\nabla_\omega\Phi(\omega_\ast;\rho)-\nabla_\omega\Phi(\omega_\ast;\rho_\ast)
}
\leq
C W_1(\rho,\delta_{\omega_\ast}).
\end{align*}
Multiplying by \(\norm{\omega-\omega_\ast}\) and combining the two estimates yields, for \(\omega\in B(\omega_\ast;r)\),
\begin{align*}
F(\omega,\rho)
&\geq
\frac{\lambda_0}{2}\norm{\omega-\omega_\ast}^2
-C\norm{\omega-\omega_\ast}\,W_1(\rho,\delta_{\omega_\ast}).
\end{align*}
Since \(W_1(\rho,\delta_{\omega_\ast})\leq M_1(\rho):=\int\norm{\omega-\omega_\ast}\rho(\dd\omega)\), it follows that on the annulus
\begin{align*}
B(\omega_\ast;r)\setminus B(\omega_\ast;r/2)
\end{align*}
one has
\begin{align*}
F(\omega,\rho)
\geq
\frac{\lambda_0r^2}{8}-CrM_1(\rho).
\end{align*}
Choose \(r_0>0\) so small that \(r_0<r/2\) and \(Cr_0\leq\lambda_0 r/16\). If initially \(\supp(\rho_0)\subset B(\omega_\ast;r_0)\), then \(M_1(\rho_0)\leq r_0\) and \(M_2(\rho_0)\leq r_0^2\). On any time interval on which \(\supp(\rho_t)\subset B(\omega_\ast;r)\), the moment estimate \eqref{Ineq:M_2} below implies \(M_2(\rho_t)\leq M_2(\rho_0)\), hence \(M_1(\rho_t)\leq M_2(\rho_t)^{1/2}\leq r_0\). Therefore \(F(\omega,\rho_t)>0\) on the annulus \(B(\omega_\ast;r)\setminus B(\omega_\ast;r/2)\). Along any characteristic \(W_t\),
\begin{align*}
\frac{\dd}{\dd t}\frac12\norm{W_t-\omega_\ast}^2
=
-2\zeta(t)F(W_t,\rho_t),
\end{align*}
so a characteristic cannot cross outward through \(\partial B(\omega_\ast;r)\): at such a crossing it lies in the annulus and the radial derivative is strictly negative. By the standard continuation argument applied up to the first exit time from \(B(\omega_\ast;r)\), no exit can occur. This proves that the support remains inside \(B(\omega_\ast;r)\), closing the bootstrap.

Finally, applying the weak formulation \eqref{eq:weak-pde} to \(f(\omega)=\norm{\omega-\omega_\ast}^2\) and using \eqref{eq:local-coercivity}, we obtain
\begin{align*}
\frac{\dd}{\dd t}M_2(\rho_t)
&=
-4\zeta(t)
\int
\inner{\omega-\omega_\ast}{
\nabla_\omega\Phi(\omega;\rho_t)
}
\rho_t(\dd\omega)\\
&\leq
-4c_\ast\zeta(t)M_2(\rho_t).
\end{align*}
Gronwall's inequality gives
\begin{align}\label{Ineq:M_2}
M_2(\rho_t)
\leq
\exp\left(-4c_\ast\int_0^t\zeta(s)\dd s\right)M_2(\rho_0),
\end{align}
which is exactly \eqref{eq:dirac-exp-rate}.
\end{proof}

\begin{remark}
The condition \(H_0\succeq\lambda_0I\) is a convenient sufficient condition, not a necessary one. The reason is that a pure translation of a Dirac mass changes both the evaluation point and the measure argument. If \(\rho_h=\delta_{\omega_\ast+h}\), then Taylor expansion gives
\begin{align*}
\nabla_\omega\Phi(\omega_\ast+h;\rho_h)
=
H_{\mathrm{tr}}h+o(\norm{h}),
\end{align*}
where
\begin{align*}
H_{\mathrm{tr}}
=
H_0+
\E_X\left[
\frac1L\sum_{l=1}^L
J_{l,\ast}^\top S_{l,\ast}J_{l,\ast}
\right].
\end{align*}
Since \(S_{l,\ast}=\diag(p_{l,\ast})-p_{l,\ast}p_{l,\ast}^\top\succeq0\), the second term is positive semidefinite. Thus a direction that is weakly unstable for the Hessian \(H_0\) alone may still be stabilized along translations by the cross-entropy curvature in logit space. The theorem uses the stronger condition \(H_0\succ0\) because it also controls non-translational spreading of the measure around \(\omega_\ast\).
\end{remark}

\subsection{A translation instability criterion}

The next result gives a complementary instability condition.
Let
\begin{align*}
p_{l,\ast}(X)=\softmax(\psi_l(X;\omega_\ast)),
\qquad
S_{l,\ast}(X)=\diag(p_{l,\ast}(X))-p_{l,\ast}(X)p_{l,\ast}(X)^\top.
\end{align*}
Define the translation Hessian
\begin{equation}
\label{eq:translation-hessian}
H_{\mathrm{tr}}
=
H_0
+
\E_X
\left[
\frac{1}{L}
\sum_{l=1}^L
\nabla_\omega\psi_l(X;\omega_\ast)^\top
S_{l,\ast}(X)
\nabla_\omega\psi_l(X;\omega_\ast)
\right].
\end{equation}

\begin{theorem}[Translation instability]
\label{thm:instability}
Assume \(\rho_\ast=\delta_{\omega_\ast}\) is stationary. If there exists \(v\in\R^{d_\omega}\) with \(\norm{v}=1\) such that
\begin{align*}
v^\top H_{\mathrm{tr}}v<0,
\end{align*}
then \(\rho_\ast\) is Lyapunov unstable in \(W_2\). More precisely, for all sufficiently small \(\varepsilon>0\), the solution starting from
\begin{align*}
\rho_0^\varepsilon=\delta_{\omega_\ast+\varepsilon v}
\end{align*}
initially moves away from \(\rho_\ast\):
\begin{align*}
\left.
\frac{\dd}{\dd t}
W_2(\rho_t^\varepsilon,\rho_\ast)^2
\right|_{t=0}
>0.
\end{align*}
\end{theorem}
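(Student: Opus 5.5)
The plan is to exploit that a Dirac initial datum stays a Dirac mass under \eqref{eq:pde}. If \(\rho_0=\delta_{\omega_0}\), let \(w(t)\) solve the finite-dimensional ODE
\begin{align*}
\dot w(t)=-2\zeta(t)\nabla_\omega\Phi\bigl(w(t);\delta_{w(t)}\bigr),\qquad w(0)=\omega_\ast+\varepsilon v .
\end{align*}
Then \(\rho_t^\varepsilon=\delta_{w(t)}\) solves the characteristic system \eqref{eq:characteristic}, since the law of the characteristic is \(\delta_{w(t)}\). It is therefore a weak solution, and by the uniqueness underlying \Cref{thm:sgd-to-pde} it is \emph{the} solution. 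The only coupling with \(\delta_{\omega_\ast}\) is the product coupling, so \(W_2(\rho_t^\varepsilon,\rho_\ast)^2=\norm{w(t)-\omega_\ast}^2\). This gives
\begin{align*}
\left.\frac{\dd}{\dd t}W_2(\rho_t^\varepsilon,\rho_\ast)^2\right|_{t=0}
=-4\zeta(0)\,\varepsilon\,\inner{v}{\nabla_\omega\Phi(\omega_\ast+\varepsilon v;\delta_{\omega_\ast+\varepsilon v})}.
\end{align*}

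Next, expand \(g(h):=\nabla_\omega\Phi(\omega_\ast+h;\delta_{\omega_\ast+h})\) to first order. For a Dirac measure \(z_l^{\delta_{\omega}}(X)=\psi_l(X;\omega)\), so \(g(h)=\E_X[\frac1L\sum_l \nabla_\omega\psi_l(X;\omega_\ast+h)^\top(\softmax(\psi_l(X;\omega_\ast+h))-Y_l)]\). Differentiating under \(\E_X\), justified by the uniform \(C^3\) bounds of Assumption \ref{ass:smoothness}, the derivative of the Jacobian factor gives \(H_0=\nabla^2_\omega\Phi(\omega_\ast;\rho_\ast)\) (the measure frozen). The derivative of the softmax factor gives \(\E_X[\frac1L\sum_l J_{l,\ast}^\top S_{l,\ast}J_{l,\ast}]\) by \eqref{eq:softmax-jacobian}. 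Together with \(g(0)=0\) from stationarity (\Cref{thm:dirac-stability}), this yields \(g(h)=H_{\mathrm{tr}}h+R(h)\) with \(\norm{R(h)}\le C\norm{h}^2\). The constant \(C\) comes from the bounds on \(\nabla^2\psi_l,\nabla^3\psi_l\) and on the derivatives of softmax.

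Substituting, the derivative at \(t=0\) equals \(-4\zeta(0)\varepsilon^2\bigl(v^\top H_{\mathrm{tr}}v+O(\varepsilon)\bigr)\). This is strictly positive for small \(\varepsilon\) provided \(\zeta(0)>0\).

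The main obstacle is the \(\zeta(0)\) factor. If \(\zeta(0)=0\), the stated derivative vanishes, so I will assume \(\zeta(0)>0\), or else pass to the rescaled clock \(\clock\). For Lyapunov instability proper, I would argue as follows. Since \(v^\top H_{\mathrm{tr}}v<0\), \(H_{\mathrm{tr}}\) has a negative eigenvalue, so \(\omega_\ast\) is a hyperbolically unstable equilibrium of the autonomous (in \(\clock\)) ODE \(w'=-g(w-\omega_\ast)\). Indeed, \(-H_{\mathrm{tr}}\) has a positive eigenvalue, and the standard linearized-instability theorem shows that trajectories starting arbitrarily close leave a fixed ball, using \(\int_0^\infty\zeta=\infty\) so that \(\clock\to\infty\). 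The rigorous part I will fully write is the derivative computation, with the second-order remainder controlled uniformly on a small ball contained in \(K\).
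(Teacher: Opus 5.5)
Your proposal is correct and follows the paper's route. The Dirac datum stays Dirac, \(W_2(\rho_t^\varepsilon,\rho_\ast)^2=\norm{\omega^\varepsilon(t)-\omega_\ast}^2\), and the expansion \(\nabla_\omega\Phi(\omega_\ast+\varepsilon v;\delta_{\omega_\ast+\varepsilon v})=\varepsilon H_{\mathrm{tr}}v+O(\varepsilon^2)\) makes the time derivative of \(W_2^2\) at \(t=0\) equal to \(-4\zeta(0)\varepsilon^2\bigl(v^\top H_{\mathrm{tr}}v+O(\varepsilon)\bigr)\).

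Your two additions sharpen the paper's argument. First, the conclusion needs \(\zeta(0)>0\), which Assumption \ref{ass:learning-rate} does not provide and the paper uses silently. Second, a positive initial derivative alone does not give Lyapunov instability, and your linearized-instability argument in the \(\clock\)-clock supplies it; that theorem needs only one positive eigenvalue of \(-H_{\mathrm{tr}}\), not hyperbolicity as you wrote.
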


\begin{remark}[Comparison with saddle instability]
This instability criterion is the mean-field analogue of the elementary fact that a finite-dimensional gradient flow moves away from an equilibrium along a negative Hessian direction. The matrix \(H_{\mathrm{tr}}\), rather than only \(H_0\), appears because translating a Dirac mass changes both the location \(\omega\) and the law \(\delta_\omega\). Hence the criterion detects instability of the coupled mean-field vector field, not merely instability of \(\Phi(\cdot;\rho_\ast)\) with \(\rho_\ast\) frozen.
\end{remark}

\begin{proof}
Since the initial condition is a Dirac mass, the solution remains a Dirac mass for short time:
\begin{align*}
\rho_t^\varepsilon=\delta_{\omega^\varepsilon(t)}.
\end{align*}
The characteristic equation is
\begin{align*}
\dot\omega^\varepsilon(t)
=
-2\zeta(t)
\nabla_\omega\Phi(\omega^\varepsilon(t);\delta_{\omega^\varepsilon(t)}).
\end{align*}
Let
\begin{align*}
r_\varepsilon(t)=\norm{\omega^\varepsilon(t)-\omega_\ast}^2.
\end{align*}
Using stationarity at \(\omega_\ast\) and Taylor expansion,
\begin{align*}
\nabla_\omega\Phi(\omega_\ast+\varepsilon v;\delta_{\omega_\ast+\varepsilon v})
=
\varepsilon H_{\mathrm{tr}}v+o(\varepsilon).
\end{align*}
Therefore
\begin{align*}
r_\varepsilon'(0)
=
-4\zeta(0)
\inner{
\varepsilon v
}{
\varepsilon H_{\mathrm{tr}}v+o(\varepsilon)
}
=
-4\zeta(0)\varepsilon^2 v^\top H_{\mathrm{tr}}v
+
o(\varepsilon^2).
\end{align*}
If \(v^\top H_{\mathrm{tr}}v<0\), then \(r_\varepsilon'(0)>0\) for all sufficiently small \(\varepsilon\), proving instability.
\end{proof}

\section{Discussion of convergence to steady states}

The results above clarify the answer to the convergence question for the limiting PDE \eqref{eq:pde}.

\paragraph{General case.}
Without additional assumptions, the PDE is a gradient-flow-type dynamics and the risk is decreasing. However, this alone does not imply convergence to a unique stationary solution. What can be proved generally is convergence to the stationary set, in the sense that all omega-limit points are stationary.

\paragraph{Convergence to a single steady state.}
Convergence to a single stationary solution follows if the omega-limit set cannot contain a nontrivial continuum of stationary measures. This is guaranteed, for example, if the relevant stationary set is finite or totally disconnected.

\paragraph{Rates.}
Explicit convergence rates require quantitative inequalities relating the dissipation \(\mathcal I(\rho)\) to the energy gap. A Kurdyka--{\L}ojasiewicz inequality gives exponential or algebraic rates depending on the exponent \(\theta\), where the exponent \(\theta\) measures the flatness of the energy landscape near the limiting stationary set.

\paragraph{Stability.}
A stationary solution is locally exponentially stable if the vector field is strongly monotone in \(W_2\) near it. For Dirac stationary measures, a verifiable sufficient condition is positive definiteness of the local Hessian \(H_0=\nabla_\omega^2\Phi(\omega_\ast;\delta_{\omega_\ast})\). Conversely, if the translation Hessian \(H_{\mathrm{tr}}\) has a negative direction, then the Dirac stationary solution is unstable.

These statements are important because the set of stationary points is generally larger than the set of global minimizers. Therefore, mean-field convergence to stationarity should not be confused with convergence to a globally optimal predictor unless additional landscape assumptions are imposed. 

One sufficient global condition is a Polyak-{\L}ojasiewicz (PL) inequality relative to the global optimum
\begin{align*}
\mathcal I(\rho)
\geq
2\lambda\bigl(\calE(\rho)-\calE_{\min}\bigr),
\qquad
\calE_{\min}=\inf_{\nu\in\calP(K)}\calE(\nu),
\end{align*}
on the invariant region explored by the flow. In that case,
\begin{align*}
\calE(\rho_t)-\calE_{\min}
\leq
\bigl(\calE(\rho_0)-\calE_{\min}\bigr)
\exp\left(-4\lambda\int_0^t\zeta(s)\dd s\right),
\end{align*}
and every limit point is a global minimizer. If, in addition, minimizers are identifiable at the level of logits, for example if there exists a target logit field \(z^\ast\) and a constant \(c>0\) such that near the minimizer set
\begin{align*}
\calE(\rho)-\calE_{\min}
\geq
c\,\E_X\left[\frac1L\sum_{l=1}^L
\norm{z_l^\rho(X)-z_l^\ast(X)}^2
\right],
\end{align*}
then convergence of the energy also implies convergence of the predictor logits. Without such PL, identifiability, or no-spurious-stationary-point assumptions, the results of this paper guarantee stationarity but not global optimality.

Alternatively, convergence to the stationary set is already convergence to the set of global mean-field minimizers if every stationary measure in the omega-limit set satisfies the variational support condition of \Cref{thm:minimizer-support}. We spell out what this means in the present attention model. For a stationary measure \(\bar\rho\), set
\begin{align*}
r_l^{\bar\rho}(X)=\softmax(z_l^{\bar\rho}(X))-Y_l(X).
\end{align*}
Using
\begin{align*}
\psi_l(X;\omega)
=
\bigl[A_\omega(X)XW_VW_OW_{\mathrm{out}}\bigr]_{l,:}
\in\R^{d_v},
\end{align*}
The support condition \eqref{eq:minimizer-support} says that every active head parameter minimizes the scalar first variation \(\Phi(\cdot;\bar\rho)\) over \(K\).  Equivalently, for every \(\omega'\in K\) and every \(\omega\in\supp(\bar\rho)\),
\begin{align*}
\Phi(\omega';\bar\rho)-\Phi(\omega;\bar\rho)\geq 0.
\end{align*}
Using the displayed expression for \(r_l^{\bar\rho}\) and the definition of \(\Phi\), this difference is
\begin{align*}
\Phi(\omega';\bar\rho)-\Phi(\omega;\bar\rho)
=
\E_X\left[
\frac1L\sum_{l=1}^L
\inner{r_l^{\bar\rho}(X)}{
\psi_l(X;\omega')-\psi_l(X;\omega)
}
\right].
\end{align*}
Therefore \eqref{eq:minimizer-support} implies the global variational inequality
\begin{align}
\label{eq:stationary-global-optimality-vi}
\E_X\left[
\frac1L\sum_{l=1}^L
\inner{r_l^{\bar\rho}(X)}{
\psi_l(X;\omega')-\psi_l(X;\omega)
}
\right]
\geq 0
\end{align}
for every \(\omega'\in K\) and every \(\omega\in\supp(\bar\rho)\), hence in particular for \(\bar\rho\)-almost every active head \(\omega\). Conversely, if \eqref{eq:stationary-global-optimality-vi} holds for every \(\omega'\in K\) and for \(\bar\rho\)-almost every \(\omega\), then \(\Phi(\omega';\bar\rho)\geq\Phi(\omega;\bar\rho)\) for all \(\omega'\in K\) and \(\bar\rho\)-almost every active \(\omega\). Since \(\Phi(\cdot;\bar\rho)\) is continuous on the compact set \(K\), this inequality extends from a full \(\bar\rho\)-measure set to the whole support \(\supp(\bar\rho)\). Thus every point in \(\supp(\bar\rho)\) belongs to \(\argmin_K\Phi(\cdot;\bar\rho)\), which is exactly \eqref{eq:minimizer-support}. Thus a stationary measure is globally minimizing precisely when no alternative attention head, with its own query, key, value, and output matrices, can reduce the residual-weighted vocabulary-logit score. Stationarity alone only gives the infinitesimal version of this condition at active heads, namely \(\nabla_\omega\Phi(\omega;\bar\rho)=0\), and therefore does not rule out another head parameter \(\omega'\) with smaller value of \(\Phi(\omega';\bar\rho)\).

Several useful sufficient conditions follow from this formulation. First, if for every \(\bar\rho\in\Omega(\rho_0)\) the scalar function \(\omega\mapsto\Phi(\omega;\bar\rho)\) has no spurious critical points on the invariant compact set, meaning that every point satisfying \(\nabla_\omega\Phi(\omega;\bar\rho)=0\) is a global minimizer of \(\Phi(\cdot;\bar\rho)\), then all stationary measures in \(\Omega(\rho_0)\) satisfy the support condition. This condition is implied, for instance, by convexity or geodesic convexity of \(\Phi(\cdot;\bar\rho)\) on the relevant parameter region together with the appropriate boundary normal-cone condition. Second, the same conclusion holds if the attainable head-feature set
\begin{align*}
\mathcal H_X=
\left\{
\bigl(\psi_1(X;\omega),\ldots,\psi_L(X;\omega)\bigr):\omega\in K
\right\}
\end{align*}
is effectively convex after the parameterization \(\omega\mapsto A_\omega XW_VW_OW_{\mathrm{out}}\), and the active heads minimize the linear functional induced by \((r_l^{\bar\rho}(X))_{l=1}^L\) over this feature set. Third, the support condition is automatic in the degenerate zero-dual-residual case in which
\begin{align*}
\E_X\left[
\frac1L\sum_{l=1}^L
\inner{r_l^{\bar\rho}(X)}{
\psi_l(X;\omega')-\psi_l(X;\omega)
}
\right]=0
\end{align*}
for all \(\omega',\omega\in K\); equivalently, \(\Phi(\cdot;\bar\rho)\) is constant on \(K\). This includes the formal case in which the residual field is orthogonal to all attainable variations of the attention-head logit feature.

Conversely, \eqref{eq:stationary-global-optimality-vi} gives a necessary and sufficient condition. In particular, if \(\bar\rho\) is stationary but there exist an active head \(\omega\in\supp(\bar\rho)\) and an alternative head \(\omega'\in K\) such that
\begin{align*}
\E_X\left[
\frac1L\sum_{l=1}^L
\inner{r_l^{\bar\rho}(X)}{
\psi_l(X;\omega')-\psi_l(X;\omega)
}
\right]<0,
\end{align*}
then \(\bar\rho\) fails the variational support condition and is not a global minimizer. Hence a necessary condition for all stationary measures in \(\Omega(\rho_0)\) to be global minimizers is the absence, at every such stationary measure, of residual-decreasing alternative heads in the feature class generated by the formula for \(\psi_l\). It is also necessary that \(\Phi(\omega;\bar\rho)\) be constant at its minimum on \(\supp(\bar\rho)\): if two active heads have different values of \(\Phi(\cdot;\bar\rho)\), or if their common value is strictly larger than \(\min_{\omega'\in K}\Phi(\omega';\bar\rho)\), then the support condition fails.

\end{document}